\newcommand{\R}{\mathbb{R}}  
\newcommand{\N}{\mathbb{N}}  
\newcommand{\PP}{\mathbb{P}}  
\newcommand{\E}{\mathbb{E}}  
\newcommand{\norm}[1]{\lVert#1\rVert}
\newcommand{\abs}[1]{\left\vert #1\right\vert}
\DeclareMathOperator{\tr}{tr}
\DeclareMathOperator*{\argmin}{arg\,min}
\DeclareMathOperator{\cov}{Cov}
\DeclareMathOperator{\supp}{supp}
\DeclareMathOperator{\rank}{rank}
\DeclareMathOperator*{\diag}{diag}
\theoremstyle{definition}
\newtheorem{definition}{Definition}[section]
\theoremstyle{remark}
\newtheorem{remark}{Remark}[section]
\theoremstyle{plain}
\newtheorem{lemma}{Lemma}[section]
\newtheorem{proposition}{Proposition}[section]
\newtheorem{corollary}{Corollary}[section]
\newtheorem{assumption}{Assumption}[section]
\title{Perturbation-based inference for diffusion processes: Obtaining
  effective models from multiscale data}
\author{Sebastian Krumscheid} 
\address{CSQI, Institute of Mathematics, {\'E}cole Polytechnique F{\'e}d{\'e}rale de Lausanne, 1015 Lausanne, Switzerland.}
\email{sebastian.krumscheid@epfl.ch}
\date{\today}
\begin{document}

\begin{abstract}
  We consider the inference problem for parameters in stochastic
  differential equation models from discrete time observations (e.g.\
  experimental or simulation data). Specifically, we study the case
  where one does not have access to observations of the model itself,
  but only to a perturbed version which converges weakly to the
  solution of the model. Motivated by this perturbation argument, we
  study the convergence of estimation procedures from a numerical
  analysis point of view. More precisely, we introduce appropriate
  consistency, stability, and convergence concepts and study their
  connection. It turns out that standard statistical techniques, such
  as the maximum likelihood estimator, are not convergent
  methodologies in this setting, since they fail to be stable. Due to
  this shortcoming, we introduce and analyse a novel inference
  procedure for parameters in stochastic differential equation models
  which turns out to be convergent. As such, the method is
  particularly suited for the estimation of parameters in effective
  (i.e.\ coarse-grained) models from observations of the corresponding
  multiscale process. We illustrate these theoretical findings via
  several numerical examples.
\end{abstract}

\maketitle
\noindent \textbf{Keywords.}  stochastic differential equation,
parametric inference, perturbed observation, convergence, consistency,
stability, coarse-graining\\

\noindent \textbf{AMS subject classifications.} 60H10, 60J60, 62M05,
34E13, 60H30, 65R32, 62F20, 62F12\\

%
%
\section{Introduction}
\label{sec:intro}

Stochastic differential equation (SDE) models play a prominent role
when studying the temporal evolution of diverse phenomena arising in a
wide range of areas. In many applications it is desirable to fit an
SDE model to discrete time observations (e.g.\ experimental or
simulation data) of the phenomenon of interest in order to use this
model for further analysis \cite{Krumscheid2015-PhysRevE}. It is often
possible to justify postulating an SDE model with a particular
structure based on theoretical arguments or previous experience with
related systems. In that case fitting the model to the available
discrete time observations corresponds to determining an unknown
parameter vector $\theta\in\R^n$ that characterises an $d$-dimensional
SDE model such as
\begin{equation}
  dX = f(X;\theta)\,dt + g(X;\theta)\,dW\;.\label{eq:intro:sde}
\end{equation}
In abstract terms, an estimator for $\theta$ can be viewed as a
mapping from the sample space (i.e.~the space of observations) to the
parameter space $\R^n$ and it is solely derived from model
\eqref{eq:intro:sde}. For concreteness, let the observations of $X$
correspond to model \eqref{eq:intro:sde} with true parameter $\theta$
and denote by $\Lambda_\lambda(X)$ the estimated value using the
procedure $\Lambda_\lambda$ based on these observations. Here
$\lambda$ is a generic parameter which accounts for effects that
influence the estimated value, such as the number of discrete time
observations or effects due to other approximations. Of particular
interest is to verify that the parameter vector $\theta$ can be
recovered asymptotically from the observations, i.e.\ it is desirable
that $\lim_{\lambda\rightarrow 0}\Lambda_\lambda(X) = \theta$ in an
appropriate sense, with $\lambda\rightarrow 0$ denoting a generic
limit value. For instance, if $\Lambda_\lambda(X)$ denotes the
continuous time maximum likelihood estimator based on the observed
path $X$ over the time interval $[0,T]$ (we will come back to this
estimator in Section \ref{sec:setting:MLE}), then we wish to recover
the true parameter asymptotically as $T\rightarrow\infty$, so that
$\lambda = 1/T$ in this case. There exists a vast and well-established
literature concerning this property, both from theoretical and
computational aspects
\cite{PrakasaRao1999,Kutoyants2004,Iacus2008,Liptser2010}. For the
special case of estimating parameters in ordinary differential
equations, i.e.\ $g\equiv 0$ in equation \eqref{eq:intro:sde}, see
\cite{Li2005} for example.

In this work, we are interested in a slightly different scenario:
instead of having direct access to observations $X$ corresponding to
model \eqref{eq:intro:sde} with true parameter $\theta$, we only
observe a process $X^\varepsilon$ which converges weakly to $X$ in the
limit of $\varepsilon\rightarrow 0$. This situation cannot easily be
ruled out in many practical applications. One such example is the
problem of inferring effective coarse-grained models from observations
of a complex or possibly unknown system with multiple temporal and/or
length scales. These multiscale systems (both deterministic and
stochastic) emerge naturally in a range of applications, including
biology \cite{Chauviere2010}, atmosphere and ocean sciences
\cite{Majda2008}, molecular dynamics \cite{Griebel2007}, materials
science \cite{Fish2009}, and fluid and solid mechanics
\cite{Horstemeyer2010,Huerre1998}. For such a multiscale system with,
e.g., two widely separated time scales one typically only has access
to discretely sampled observations of the multiscale process
$X^\varepsilon$ which converges weakly in $C([0,T],\R^d)$ to the
solution $X$ of the corresponding coarse-grained model as
$\varepsilon\rightarrow 0$. Nevertheless one is interested in
identifying parameters in the coarse-grained model solved by $X$ using
the observation $X^\varepsilon$. In other examples one might, however,
not even be aware of the fact that one observes only a perturbed
version $X^\varepsilon$ of $X$ instead of $X$. Consequently it is
indispensable in these situations to use an estimation procedure that
is robust against this perturbation of the observation, so that one
can (asymptotically) recover the unknown parameter $\theta$ also from
$X^\varepsilon$ instead of just from $X$, in the sense that
$\lim_{\varepsilon\rightarrow 0}\lim_{\lambda\rightarrow
  0}\Lambda_\lambda(X^\varepsilon) = \theta$ in an appropriate sense.

Although this kind of robustness for estimation schemes seems
certainly desirable in many applications, it has not yet been treated
systematically in the literature. Partially related problems have been
studied in the context of parametric inference for misspecified
models; see, e.g., \cite[Ch.\ $2.6$]{Kutoyants2004} and the references
therein. In this field, one is mainly concerned with
consistency-related results of an estimation procedure
$\Lambda_\lambda$ from a statistical perspective when the observations
originate from an SDE, which is not contained in the considered class
of parametrized models such as \eqref{eq:intro:sde} (i.e.\ there does
not exist a true $\theta$). More precisely, it is of interest whether
or not the estimation procedure $\Lambda_\lambda$ (e.g.\ the maximum
likelihood estimator) still converges to a well-defined limit as
$\lambda\rightarrow 0$. It is moreover known that inferring effective
coarse-grained SDE models from temporal observations of a multiscale
system by means of estimators such as the quadratic variation of the
path estimator or maximum likelihood estimator is sometimes
impossible, since these estimators can be inconsistent (i.e.\
asymptotically biased) due to the multiscale structure of the data
\cite{Papavasiliou2009,Pavliotis2007,Pavliotis2012,Azencott2010}.  As
such, many commonly used statistical inference techniques might not be
endowed with the desirable robustness property motivated above, thus
making an accurate estimation of $\theta$ in \eqref{eq:intro:sde}
impossible, or doubtful at best. Similar consistency concerns may thus
be relevant also in any technique that relies on a stochastic
differential equation model, which had been identified form available
multiscale data, including widely applied techniques such as
stochastic filtering and stochastic control. Related work on
stochastic filtering and stochastic control for SDEs with multiple
scales can, e.g., be found in \cite{Imkeller2013,Zhang2014}.

Motivated by this potential insufficiency of statistical inference
techniques for diffusion processes, the main objective of the present
study is twofold. Firstly, we devise a numerical analysis oriented
point of view on the convergence of a general estimation
procedure. Specifically, we will introduce appropriate consistency,
stability, and convergence concepts by merging tools from mathematical
statistics and numerical analysis. This combined consistency and
stability analysis framework for inference problems is motivated by
the well-known fact in numerical analysis that consistency of a method
is not sufficient to guarantee an accurate solution to a numerical
problem \cite{Lax1956}. Secondly, we introduce a novel parametric
inference methodology that is convergent within this framework and, as
such, it is in particular robust with respect to weak perturbations,
in the sense that $\lim_{\varepsilon\rightarrow
  0}\lim_{\lambda\rightarrow 0}\Lambda_\lambda(X^\varepsilon) =
\theta$ for any $X^\varepsilon$, which converges weakly in
$C([0,T],\R^d)$ to $X$ as $\varepsilon\rightarrow 0$.  This
methodology is motivated by the recent computational studies
\cite{Krumscheid2013,Kalliadasis2015}. In fact, by subsequently
generalising and extending ideas presented in these works we obtain a
methodology which is more amenable to a rigorous convergence
analysis. The main element is to obtain an appropriate functional
relation between the unknown parameter vector $\theta$ and the
statistical properties of the model \eqref{eq:intro:sde}. From this
resulting estimating equation, we will derive an estimator of $\theta$
via the best approximation of a system of equations. Depending on the
available observation design, i.e.\ either many short trajectories are
available or only one long time series is available, we incorporate
the discretely sampled observations into this framework by replacing
theoretical conditional moments by data-driven approximations.

In the absence of input perturbations (i.e.\ when observing a solution
to \eqref{eq:intro:sde} directly), the methodology introduced here
shares some similarities with the generalised method of moments
\cite{Hansen1982}. In fact, both methods are based on deriving
parametric estimators from an appropriate estimating equation that
involves moments of a solution $X$ to the SDE \eqref{eq:intro:sde}.
In the generalised method of moments, such an equation typically
exploits ergodicity of the process $X$ and involves moment conditions
with respect to the invariant distribution. Conversely, the
methodology introduced here uses an estimating equation that accounts
for moments of (short) local transitions. Incorporating these
transitions allows for identifying all parameters $\theta\in\R^n$ in
\eqref{eq:intro:sde} at once. As a matter of fact, this is not
possible when relying only on ergodic averages because different
process may have the same invariant distribution. For example, both
diffusion processes $dX = -\alpha X\,dt+\sqrt{2\sigma}\,dW_t$ and
$dY = -\alpha\beta Y\,dt+\sqrt{2\beta\sigma}\,dW_t$ with
$\alpha,\beta,\sigma>0$ have the same invariant distribution
$\mathcal{N}(0,\sigma/\alpha)$.

The rest of this work is structured as follows. We begin, in Section
\ref{sec:setting}, by introducing a numerical analysis oriented
inference framework for diffusion processes. As an example, we study
the maximum likelihood estimator concerning its convergence properties
within this framework. In Section \ref{sec:estimator} we introduce the
novel class of estimation procedures for which we present the
convergence analysis in Section \ref{sec:conv}. To support the
theoretical findings, we investigate several data-driven
coarse-graining examples in Section \ref{sec:numerics}. Conclusions
and open questions are offered in Section \ref{sec:conclusion}.

%
%
\section{Parametric Inference Framework for Diffusion Processes}
\label{sec:setting}
Throughout this work, let $(\Omega,\mathcal{F},{(\mathcal{F}_t)}_{t\in
  [0,T]},\PP)$ be a complete, filtered probability space satisfying
the usual conditions. Furthermore, let $W = \{W(t)\colon t\in[0,T]\}$
be an $r$-dimensional Brownian motion with respect to
${(\mathcal{F}_t)}_{t\in [0,T]}$. We consider a $d$-dimensional
It{\^o} stochastic differential equation (SDE),
\begin{equation}
  dX = f(X)\,dt + g(X)\,dW_t\;,\quad X(0) = \xi\;,\label{eq:sde:generic}
\end{equation}
over a finite time interval $[0,T]$, $T>0$. The initial condition
$\xi\in\R^d$ is assumed to be independent of the $\sigma$-field
generated by $W$ and such that $\E(\norm{\xi}_2^p)<\infty$ for any
$p\ge 2$, where $\norm{\cdot}_2$ denotes the Euclidean norm in
$\R^d$. Moreover, $f\colon\R^d\rightarrow\R^d$ and
$g\colon\R^d\rightarrow\R^{d\times r}$ are assumed to be such that
\eqref{eq:sde:generic} has a unique strong solution on $[0,T]$; see
e.g.\ \cite{Karatzas1991,Oksendal2003}.

The parametric inference problem for diffusion processes, i.e.\ for
solutions of SDEs, is then the following. Let both the function $f$
and the function $g$ in \eqref{eq:sde:generic} depend on some unknown
vector-valued parameter $\theta\in\R^n$, $n\in\N$, so that
\eqref{eq:sde:generic} reads
\begin{equation}
  dX = f(X;\theta)\,dt + g(X;\theta)\,dW_t\;.\label{eq:sde:param:est:generic}
\end{equation}
We assume that \eqref{eq:sde:param:est:generic} has a unique strong
solution for any admissible parameter
$\theta\in\Theta\subseteq\R^n$. Based only on available observations
of the solution to \eqref{eq:sde:param:est:generic}, the goal then is
to accurately infer the unknown parameter $\theta$ in
\eqref{eq:sde:param:est:generic} from the observations.

An estimator for a parameter vector in SDEs is given as a mapping of
the sample space to the space of admissible parameters $\Theta$ (cf.\
\cite{Kutoyants2004,PrakasaRao1999}). Based on available observations
of the diffusion process $X$ solving \eqref{eq:sde:param:est:generic}
with parameter $\theta\in\Theta$, an estimate of $\theta$ is then
given by applying this mapping to the observations.  With slight abuse
of notation, throughout this work we will denote by $X$ both the
process solving \eqref{eq:sde:param:est:generic} and observations of
this process, whenever a distinction is not crucial.  Let
$\Lambda_\lambda(X)$ denote such an estimated value based on the
observations $X$. Here we introduce a generic, possibly vector-valued,
parameter $\lambda$ to account for the fact that the estimated value
$\Lambda_\lambda(X)$ depends on properties of the available
observations, such as the number of observations or approximations of
continuous objects (e.g.\ integrals or discretely sampled
observations). We emphasise that, although, we use only one parameter
$\lambda$ to index this family of estimators $\Lambda_\lambda$, the
generic limit $\lambda\rightarrow 0$ is merely meant as a notation for
considering the limit of all properties that influence the estimated
value, such as, for example, taking the number of observations to
infinity and the mesh size of any discretization to zero. Ultimately,
the question is whether or not the estimated value
$\Lambda_\lambda(X)$ is an accurate approximation of $\theta$. To make
this concept more precise we introduce two consistency concepts, which
express purely statistical ideas. The first one introduces the class
of feasible processes $F$, i.e.\ the class of processes for which the
estimation procedure $\Lambda_\lambda$ has a well-defined limit as
$\lambda\rightarrow 0$.
\begin{definition}[Numerical Consistency]
  \label{def:num:cons}
  Let $X$ be the solution to \eqref{eq:sde:param:est:generic}
  associated with parameter $\theta\in\Theta$ and let
  $\Lambda_\lambda$ be an estimation procedure for $\theta$. The
  procedure $\Lambda_\lambda$ is called \emph{numerically consistent}
  for class $F$, if $\lim_{\lambda\rightarrow 0}\Lambda_\lambda(Y) =:
  \Lambda (Y)$ exists in probability for any $Y\in F$. The class $F$
  is called the class of feasible processes and is such that $X\in F$.
\end{definition}

The class $F$ can be thought of as the domain of definition of the
estimation procedure, in the sense that it typically contains all
processes such that the estimated value exists in the limit as
$\lambda\rightarrow 0$. Moreover, it is natural to require that
$X\in F$, as it is not possible to estimate $\theta$ accurately using
the methodology $\Lambda_\lambda$ otherwise. The second consistency
concept given below then links the limiting value $\Lambda(X)$ to the
sought-after parameter $\theta$.
\begin{definition}[Model Consistency]
  \label{def:model:cons}
  Let $X$ be the solution to \eqref{eq:sde:param:est:generic}
  associated with parameter $\theta\in\Theta$. A numerically
  consistent estimation procedure $\Lambda_\lambda$ for $\theta$ is
  called \emph{model consistent}, if
  $\Lambda(X) \equiv \lim_{\lambda\rightarrow 0}\Lambda_\lambda(X) =
  \theta$ in probability.
\end{definition}

\begin{remark}
  \label{rem:connection:stat:consitency}
  The notion of a consistent estimation procedure commonly used in the
  mathematical statistics literature is a special case of the
  consistency concept introduced in Definition
  \ref{def:model:cons}. To see this, we assume that the estimation
  procedure $\Lambda_\lambda$ depends only on the number of
  observations, that is $1/\lambda$ denotes the number of available
  observations. Furthermore, we assume that $\Lambda_\lambda$ is
  numerically consistent for class $F = \{X\}$. Then model consistency
  of $\Lambda_\lambda$ in view of Definition \ref{def:model:cons}
  coincides with the consistency concept used in mathematical
  statistics; see, e.g., \cite{Lehmann1998,vanderVaart2000}. The
  reason for considering a more general consistency concept here is
  that we will also be concerned with additional approximation errors
  as well as perturbations to the input $X$, both of which will
  influence the convergence.
\end{remark}

As it is well-known in numerical analysis, consistency of a numerical
method is not sufficient to guarantee an accurate solution to a
numerical problem, since small perturbations in the input may result
in drastic changes in the solution. Therefore, a stability condition
is typically employed. To study the effect of ``small'' perturbations
to the input in the context of parametric inference for diffusion
processes, we consider perturbations in the following sense.
\begin{definition}[Weak perturbations]
\label{def:input:perturbation}
Let $(\Omega,\mathcal{F},\PP)$ be a probability space and let
$X^\varepsilon$, $\varepsilon>0$, and $X$ be stochastic processes
defined on that space, whose trajectories are almost surely continuous
on the time interval $[0,T]$ with values in $\R^d$. We say
$X^\varepsilon$ is a \emph{weak perturbation} of $X$, if
  \begin{equation}
    \lim_{\varepsilon\rightarrow 0}\sup_{t\in [0,T]}\abs{\E\Bigl(\varphi\bigl(X^\varepsilon(t)\bigr)\Bigr) - 
      \E\Bigl(\varphi\bigl(X(t)\bigr)\Bigr)} = 0\label{eq:def:weak:conv}
  \end{equation}
  for every $\varphi\in C_b(\R^d)$.
\end{definition}

A closely related concept is that of weak convergence of measures (see
e.g.\ \cite[Ch.\ IV.$30$]{Bauer2001_eng}), in the sense that a
sufficient condition for $X^\varepsilon$ to be a weak perturbation is
to converge weakly in $C([0,T],\R^d)$ to $X$.  Based on these weak
perturbations, we introduce a natural stability condition in context
of parametric inference for diffusion processes.
\begin{definition}[$\varepsilon$-stability]
  \label{def:eps:stab}
  Let $X$ be the solution to \eqref{eq:sde:param:est:generic}
  associated with parameter $\theta\in\Theta$. Moreover, let the
  estimation procedure $\Lambda_\lambda$ for $\theta$ be numerically
  consistent for class $F$. Then $\Lambda_\lambda$ is called
  \emph{$\varepsilon$-stable}, if
  $\lim_{\varepsilon\rightarrow 0}\Lambda(X^\varepsilon) = \Lambda(X)$
  in probability for any weak perturbation $X^\varepsilon$ of $X$,
  which is such that $X^\varepsilon\in F$.
\end{definition}
\begin{remark}
  \label{rem:stab:cont}
  The concept of $\varepsilon$-stability of an estimation procedure
  can also be viewed as a continuity property of that procedure. In
  fact, the $\varepsilon$-stability condition
  $\lim_{\varepsilon\to 0}\Lambda(X^\varepsilon) =
  \Lambda(\lim_{\varepsilon\to 0}X^\varepsilon)$, implies that the
  (asymptotic) estimation procedure
  $\Lambda(X^\varepsilon) \equiv \lim_{\lambda\to
    0}\Lambda_\lambda(X^\varepsilon)$, viewed as a function of
  $\varepsilon$, is (asymptotically) continuous in
  $\varepsilon=0$. Conversely, an $\varepsilon$-unstable estimation
  procedure is discontinuous in $\varepsilon=0$.
\end{remark}
Regardless of the consistency and stability concepts developed above,
ultimately we are interested whether or not an estimation procedure
for $\theta$ in \eqref{eq:sde:param:est:generic} yields an accurate
approximation when applied to a weak perturbation $X^\varepsilon$ of
$X$. Only when the estimated value based on weak perturbations
coincides with the true value $\theta$ asymptotically, we call a
estimation methodology convergent. The following definition makes this
intuition precise.
\begin{definition}[Convergence]
  \label{def:est:conv}
  Let $X$ be the solution to \eqref{eq:sde:param:est:generic}
  associated with parameter $\theta\in\Theta$. An estimation procedure
  $\Lambda_\lambda$ for $\theta$ is called \emph{convergent} for class
  $F$, if
  \begin{equation*}
    \lim_{\varepsilon\rightarrow 0}\lim_{\lambda\rightarrow 0}\Lambda_\lambda(X^\varepsilon) = \theta\
  \end{equation*}
  in probability for any weak perturbation $X^\varepsilon$ of $X$,
  such that $X^\varepsilon\in F$.
\end{definition}

There is a natural link between the consistency and stability concepts
introduced above, and this convergence concept.
\begin{lemma}
  Let $\Lambda_\lambda$ be an estimation procedure for the parameter
  $\theta$ in \eqref{eq:sde:param:est:generic}. If $\Lambda_\lambda$
  is model consistent and $\varepsilon$-stable for class $F$, then
  $\Lambda_\lambda$ is convergent for class $F$. Conversely, if
  $\Lambda_\lambda$ is convergent for class $F$ and model consistent,
  then $\Lambda_\lambda$ is $\varepsilon$-stable for class $F$.
\end{lemma}
\begin{proof}
  Let $X$ be the solution to \eqref{eq:sde:param:est:generic}
  associated with parameter $\theta\in\Theta$ and let $F$ be the class
  of processes for which $\Lambda_\lambda$ is numerically
  consistent. The fact that model consistency and
  $\varepsilon$-stability imply convergence then follows from the
  bound
  $\norm{\Lambda_\lambda(X^\varepsilon) - \theta}_2\le
  \norm{\Lambda_\lambda(X^\varepsilon) - \Lambda_\lambda(X)}_2 +
  \norm{\Lambda_\lambda(X) - \theta}_2$, since the right-hand side
  vanishes for any $X^\varepsilon\in F$ as $\varepsilon\to 0$ and
  $\lambda\to 0$. Similarly, model consistency and convergence imply
  that the right-hand side of
  $\norm{\Lambda_\lambda(X^\varepsilon) - \Lambda_\lambda(X)}_2\le
  \norm{\Lambda_\lambda(X^\varepsilon) - \theta}_2 +
  \norm{\Lambda_\lambda(X) - \theta}_2$ vanishes asymptotically, which
  shows the $\varepsilon$-stability.
\end{proof}%
In other words, the Lemma above states that stability is a necessary
and sufficient condition for the convergence of a consistent
estimation methodology. This relationship resembles the essence of the
Lax equivalence theorem \cite{Lax1956}, at least in the context of
linear problems.

\begin{remark} 
  \label{rem:illposed:prob}
  By casting the parametric inference problem into a numerical
  analysis framework, one notices the resemblance to inverse problems
  and to regularisation techniques. In fact, there is direct link to
  the concept of well-posed problems in the sense of Hadamard, as such
  that $\varepsilon$-stability reflects the dependency of the solution
  on perturbations of the input argument. Consequently, the parametric
  inference problem using an $\varepsilon$-unstable method would not
  be well-posed and it had to be regularised for its numerical
  treatment. Typical regularisation techniques reformulate the problem
  by incorporating additional information (e.g.\ regularity
  assumptions) or constraints to obtain a well-posed problem. We will
  briefly come back to this point in Remark \ref{rem:subsampling}.
\end{remark}

\subsection{The maximum likelihood estimator for multiscale diffusion processes}
\label{sec:setting:MLE}
In this Section we consider the maximum likelihood estimator (MLE) in
continuous time to illustrate the concepts introduced
above. Specifically, we focus on a simple one-dimensional example
borrowed from \cite{Pavliotis2007}. Consider the case where the SDE
\eqref{eq:sde:param:est:generic} is the first order Langevin equation,
given by
\begin{equation}
  dX = -A V'(X)\, dt + \sqrt{2\Sigma}\,dW_t\;,\label{eq:langevin:eff:sde}
\end{equation}
with $A,\Sigma >0$. We assume that $\Sigma$ is known so that we are
only concerned with estimating the parameter $A$ from a trajectory of
continuous time observations on the time interval $[0,T]$, $T>0$. Let
$V\colon\R\rightarrow \R$ be a confining potential with at most
polynomial growth, for which there exist $c_1,c_2>0$ such that
$-V'(x)x\le c_1 - c_2 x^2$ for every $x\in\R$ (e.g.\ $V(x) =
x^2/2$). Consequently, the solution $X$ to \eqref{eq:langevin:eff:sde}
is ergodic. Then the MLE for $A$ is given by (see
\cite{PrakasaRao1999,Kutoyants2004})
\begin{equation}
  \Lambda_T(X) := -\,\frac{\int_0^T V'\bigl(X(t)\bigr)\, dX(t)}{\int_0^T\abs{V'\bigl(X(t)\bigr)}^2\,dt}\;,\label{eq:langevin:mle}
\end{equation}
where we have indexed the class of estimators by $T$ instead of
$\lambda$, as $\lambda = 1/T$ here. Mimicking the proof of \cite[Thm.\
$3.4$]{Pavliotis2007}, one readily obtains numerical consistency of
the MLE for a class of ergodic diffusion processes.
\begin{lemma}[MLE is numerical consistent]
  \label{lem:mle:num:cons}
  Let $F$ be defined as
  \begin{align*}
    F = \Biggl\{Y\in C\bigl([0,\infty)\bigr)&\colon dY = b(Y)\,dt + \sqrt{2\gamma}\,dW_t\;,\; Y
    \text{ ergodic with meas.\ } \mu  \text{ and }
    \tfrac{\vert\int bV'\,d\mu\vert}{\int {\vert
        V'\vert}^2\,d\mu} < \infty\Biggr\}\;,
  \end{align*}
  with $V\colon\R\to\R$ being the confining potential in
  \eqref{eq:langevin:eff:sde}. Then the MLE $\Lambda_T$ in
  \eqref{eq:langevin:mle} is numerical consistent for class $F$, in
  the sense that $\lim_{T\to\infty}\Lambda_T(Y)$ is almost surely
  finite for any $Y\in F$.
\end{lemma}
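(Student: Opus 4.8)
The plan is to fix an arbitrary feasible process $Y\in F$ and to show that $\Lambda_T(Y)$ converges almost surely, hence in probability, to the finite quantity $\Lambda(Y):=-\tfrac{\int bV'\,d\mu}{\int{|V'|}^2\,d\mu}$. First I would insert the dynamics $dY=b(Y)\,dt+\sqrt{2\gamma}\,dW_t$ into the numerator of \eqref{eq:langevin:mle}, splitting it into the Lebesgue integral $\int_0^T V'(Y(t))\,b(Y(t))\,dt$ and the It\^o integral $\sqrt{2\gamma}\int_0^T V'(Y(t))\,dW_t$, and then divide both numerator and denominator by $T$.

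For the denominator $\tfrac1T\int_0^T{|V'(Y(t))|}^2\,dt$ and for the drift contribution $\tfrac1T\int_0^T V'(Y(t))\,b(Y(t))\,dt$, I would apply the ergodic theorem for $Y$: the feasibility condition built into the definition of $F$ ensures $V'b\in L^1(\mu)$ and $0<\int{|V'|}^2\,d\mu<\infty$, so these time averages converge almost surely to $\int V'b\,d\mu$ and $\int{|V'|}^2\,d\mu$, respectively. The crucial remaining step is to control the stochastic integral $M_T:=\int_0^T V'(Y(t))\,dW_t$, which is a continuous local martingale with quadratic variation $\langle M\rangle_T=\int_0^T{|V'(Y(t))|}^2\,dt$. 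From the previous step $\tfrac1T\langle M\rangle_T\to\int{|V'|}^2\,d\mu>0$, so in particular $\langle M\rangle_T\to\infty$ almost surely; the strong law of large numbers for continuous martingales then gives $M_T/\langle M\rangle_T\to 0$ almost surely, and hence $M_T/T=(M_T/\langle M\rangle_T)\cdot(\langle M\rangle_T/T)\to 0$ almost surely.

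Assembling the three limits and using that the limiting denominator is nonzero, one obtains $\Lambda_T(Y)\to\Lambda(Y)$ almost surely, which is the claimed numerical consistency; the argument is essentially that of \cite[Thm.~$3.4$]{Pavliotis2007}. It then remains to verify $X\in F$, as required by Definition \ref{def:num:cons}: the confining-potential hypothesis on $V$ makes \eqref{eq:langevin:eff:sde} ergodic with Gibbs measure $\mu(dx)\propto e^{-V(x)/\Sigma}\,dx$, the at-most-polynomial growth of $V$ guarantees $\int{|V'|}^2\,d\mu<\infty$, and with drift $b=-AV'$ the defining ratio reduces to $A<\infty$.

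I expect the main obstacle to be the rigorous handling of the martingale term --- specifically arguing that $\langle M\rangle_T\to\infty$ so that the martingale strong law is applicable (this is where $\int{|V'|}^2\,d\mu>0$ enters) --- together with fixing the precise meaning of \emph{ergodic} in the definition of $F$ so that the $L^1(\mu)$-ergodic theorem yields almost-sure, rather than merely distributional, convergence of the relevant time averages; once these points are settled the rest is routine bookkeeping.
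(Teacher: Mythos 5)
Your argument is correct and is precisely the one the paper intends: the paper gives no written proof of Lemma \ref{lem:mle:num:cons} but states that it follows by ``mimicking the proof of [Thm.\ $3.4$, Pavliotis2007]'', which is exactly your decomposition into ergodic time averages plus a martingale term killed by the strong law of large numbers for continuous local martingales. Your additional verification that $X\in F$ is a sensible completion of the statement's requirements and matches the ergodicity discussion preceding the lemma in the paper.
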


Clearly, the solution $X$ to \eqref{eq:langevin:eff:sde} is in
$F$. Furthermore, model consistency of the MLE is a well-known fact in
the mathematical statistics literature; see
\cite{PrakasaRao1999,Kutoyants2004,Liptser2010} for example.
\begin{lemma}[MLE is model consistent]
  \label{lem:mle:mod:cons}
  Let $X$ be the solution to \eqref{eq:langevin:eff:sde} corresponding
  to the parameters $A,\Sigma>0$. Then the MLE $\Lambda_T$ for $A$ is
  model consistent, so that $\lim_{T\rightarrow\infty}\Lambda_T(X) = A$ in
  probability.
\end{lemma}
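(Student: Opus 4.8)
The plan is to use the explicit It\^o representation of the estimator. Substituting the dynamics \eqref{eq:langevin:eff:sde} into the numerator of \eqref{eq:langevin:mle} and using
\[
  \int_0^T V'\bigl(X(t)\bigr)\,dX(t) = -A\int_0^T \abs{V'\bigl(X(t)\bigr)}^2\,dt + \sqrt{2\Sigma}\int_0^T V'\bigl(X(t)\bigr)\,dW_t\;,
\]
one obtains the exact identity
\[
  \Lambda_T(X) = A - \sqrt{2\Sigma}\,\frac{M_T}{\langle M\rangle_T}\;,\qquad M_T := \int_0^T V'\bigl(X(t)\bigr)\,dW_t\;,
\]
where $\langle M\rangle_T = \int_0^T\abs{V'(X(t))}^2\,dt$ is the quadratic variation of the continuous local martingale $M$ (and is strictly positive for $T>0$ since $X$ is non-degenerate). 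Thus model consistency in the sense of Definition \ref{def:model:cons} reduces to showing $M_T/\langle M\rangle_T \to 0$ as $T\to\infty$.

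Next I would invoke ergodicity of $X$ with invariant measure $\mu$, density proportional to $e^{-V/\Sigma}$. The confining and at-most-polynomial-growth assumptions on $V$ guarantee that $\mu$ has finite polynomial moments, so $c := \int\abs{V'}^2\,d\mu \in (0,\infty)$, the positivity holding because $V'$ is not $\mu$-a.e.\ zero and $\mu$ has full support. The ergodic theorem then gives $\tfrac1T\langle M\rangle_T = \tfrac1T\int_0^T\abs{V'(X(t))}^2\,dt \to c$ almost surely, and in particular $\langle M\rangle_T \to \infty$ a.s. Applying the strong law of large numbers for continuous local martingales (see, e.g., \cite{Liptser2010,Kutoyants2004}) then yields $M_T/\langle M\rangle_T \to 0$ a.s.\ on the event $\{\langle M\rangle_\infty = \infty\}$, which has probability one. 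Substituting back, $\Lambda_T(X) \to A$ almost surely, hence a fortiori in probability.

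The It\^o substitution and the ergodic-theorem limit are routine; the only point requiring a little care is controlling the stochastic-integral term relative to its diverging quadratic variation, i.e.\ the martingale strong law. If one prefers to avoid it, a more elementary route suffices for the (weaker) convergence in probability claimed: for stationary initial data $X(0)\sim\mu$ one has $\E\bigl[(M_T/T)^2\bigr] = T^{-2}\int_0^T\E\abs{V'(X(t))}^2\,dt = c/T \to 0$, so $M_T/T\to 0$ in $L^2$, while $\langle M\rangle_T/T \to c$ by the ergodic theorem, whence the ratio tends to $0$ in probability; the general initial condition is handled by the same ergodic-theorem argument. Either way one concludes $\lim_{T\to\infty}\Lambda_T(X) = A$ in probability, which is the asserted model consistency.
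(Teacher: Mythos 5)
Your proposal is correct, and it is essentially the argument the paper relies on: the paper offers no proof of this lemma, instead citing the standard references (Prakasa Rao, Kutoyants, Liptser--Shiryaev), and the classical proof found there is exactly your route --- substitute the dynamics into the numerator to get $\Lambda_T(X) = A - \sqrt{2\Sigma}\,M_T/\langle M\rangle_T$, then combine the ergodic theorem for $\langle M\rangle_T/T$ with the strong law for continuous local martingales. The only point worth flagging is that the finiteness of $\int \abs{V'}^2\,d\mu$ needs growth control on $V'$ (not just on $V$), but this is the same implicit assumption the paper itself makes in defining the class $F$ of Lemma \ref{lem:mle:num:cons}.
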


Despite the consistency results of Lemmas \ref{lem:mle:num:cons} and
\ref{lem:mle:mod:cons}, an accurate numerical treatment of the
parametric inference problem for the SDE model
\eqref{eq:langevin:eff:sde} via the MLE is still not guaranteed. In
fact, the MLE fails to be $\varepsilon$-stable and it is, as such, not
a convergent estimation procedure. To see this, we construct a weak
perturbation in $F$, for which the MLE is not
convergent. Specifically, consider the SDE
\begin{equation}
  dX^\varepsilon = -\alpha V'(X^\varepsilon)\,dt - \frac{1}{\varepsilon} p'(X^\varepsilon/\varepsilon)\,dt + \sqrt{2\sigma}\,dW_t\;,\label{eq:langevin:fast:sde}
\end{equation}
with $p$ being a smooth periodic function with period $L>0$ and let
$\varepsilon>0$. Let
$Z_{\pm}(\sigma) = \int_0^Le^{\pm p(y)/\sigma}\,dy$ and define
$R(\sigma) = L^2/(Z_{+}(\sigma)Z_{-}(\sigma))$. Notice that
$0<R(\sigma)<1$ in view of the Cauchy--Schwarz inequality. Then for
$\alpha$, $\sigma$ such that $\alpha R(\sigma) = A$ and
$\sigma R(\sigma) = \Sigma$ it is known that $X^\varepsilon$ solving
\eqref{eq:langevin:fast:sde} converges weakly in $C([0,T],\R)$ to $X$
in the limit as $\varepsilon\rightarrow 0$.  In other words,
$X^\varepsilon$ is a weak perturbation of $X$ solving
\eqref{eq:langevin:eff:sde} in the sense of Definition
\ref{def:input:perturbation}. Moreover, the process $X^\varepsilon$ is
ergodic for any $\varepsilon>0$ \cite[Prop.\ $5.2$]{Pavliotis2007} and
it follows that $X^\varepsilon\in F$, where $F$ is as in Lemma
\ref{lem:mle:num:cons}. Thus, the consistency results of Lemmas
\ref{lem:mle:num:cons} and \ref{lem:mle:mod:cons} imply that
\begin{equation*}
  \lim_{\varepsilon\rightarrow 0}\lim_{T\rightarrow\infty}\abs{A-\Lambda_T(X^\varepsilon)} \ge 
  \lim_{\varepsilon\rightarrow 0}\lim_{T\rightarrow\infty}\bigl\vert\abs{A-\Lambda(X^\varepsilon)} - \abs{\Lambda(X^\varepsilon)-\Lambda_T(X^\varepsilon)}\bigr\vert
  \ge \lim_{\varepsilon\rightarrow 0} \abs{\Lambda(X^\varepsilon)-\Lambda(X)}\;,
\end{equation*}
holds in probability. It follows from \cite[Thm.\
$3.4$]{Pavliotis2007} that
$\lim_{\varepsilon\rightarrow 0}
\abs{\Lambda(X^\varepsilon)-\Lambda(X)} =
A\frac{\abs{1-R(\sigma)}}{R(\sigma)}>0$, which shows that the MLE is
not $\varepsilon$-stable for the perturbation $X^\varepsilon\in
F$. Moreover, we find that
$\lim_{\varepsilon\rightarrow
  0}\lim_{T\rightarrow\infty}\abs{A-\Lambda_T(X^\varepsilon)} >0$ for
this process. That is, the process $X^\varepsilon\in F$ is a
counterexample showing that the MLE cannot be convergent for class
$F$.

Finally, it is noteworthy that not just the MLE fails to be
$\varepsilon$-stable, but that also the underlying likelihood
function, from which the MLE expression in \eqref{eq:langevin:mle}
eventually follows, is drastically affected by the weak perturbation
$X^\varepsilon$. In fact, it is known that the (asymptotic) likelihood
function itself is corrupted by a non-constant bias term (as a
function of the parameter $\theta$) when confronted with a weak
perturbation $X^\varepsilon$ \cite[Thm.\ 3.12]{Papavasiliou2009}.
\begin{remark}
  \label{rem:subsampling}
  As the MLE is not convergent, for it to become a meaningful
  inference scheme appropriate regularisation techniques have to be
  used, as we have mentioned in Remark \ref{rem:illposed:prob}
  already. Although not coined as such, the principle of data
  subsampling for parametric inference (see, e.g.,
  \cite{Pavliotis2007,Papavasiliou2009,Pavliotis2012,Azencott2010,Azencott2011,Azencott2013})
  can be viewed as such a regularisation technique as one introduces
  additional conditions on the sampling rate. In fact, subsampling the
  data at an optimal rate can make the MLE \eqref{eq:langevin:mle}
  convergent for class $F$; see \cite{Pavliotis2007}. Related work on
  parametric inference based on multiscale data combined with
  subsampling techniques can also be found in
  \cite{Zhang2005,Cotter2009,Olhede2009,Crommelin2011,Crommelin2012}
  for example, while the references \cite{Spiliopoulos2013,Gailus2017}
  contain work on the MLE for multiscale problems in the case of
  vanishing noise intensity. We emphasise, however, that the optimal
  sampling rate is typically unknown and that it can also vary for
  different parameters in the same model, thus making a subsampling
  approach often inefficient in practise.
\end{remark}

%
%
\section{A Parametric Inference Technique for Diffusion Processes}
\label{sec:estimator}

Here we introduce a procedure for the parametric inference problem of
diffusion processes which is motivated by the recent computational
results in \cite{Krumscheid2013,Kalliadasis2015}. In fact, we extend
and generalise the introduced procedure further to make it more
amenable to a theoretical treatment. Specifically, consider the
following $d$-dimensional It{\^o} SDE
\begin{equation}
  dX = f(X)\,dt + g(X)\,dW_t\;,\quad X(0) = \xi\;,\label{eq:sde}
\end{equation}
where $f\colon\R^d\rightarrow\R^d$,
$g\colon\R^d\rightarrow\R^{d\times r}$, and $W$ denotes a standard
$r$-dimensional Brownian motion. The initial condition $\xi\in\R^d$ is
assumed to be deterministic and, as before, both functions $f$ and $g$
are assumed to be such that \eqref{eq:sde} has a unique strong
solution on any finite time interval $[0,T]$, $T>0$. In what follows,
we will use $X_\xi(t)$ to denote a solution of \eqref{eq:sde} at
time $t\in[0,T]$ started in $\xi$ at time zero, i.e.\
$X_\xi(0) = \xi$. Moreover, let $\mathcal{L}$ be the generator of the
diffusion process \eqref{eq:sde}, i.e.\
\begin{equation*}
  \mathcal{L}\phi = f\cdot\nabla\phi + \frac{1}{2} G:\nabla\nabla\phi\;,
\end{equation*}
with $G:=gg^T\colon\R^d\rightarrow\in\R^{d\times d}$ and where
$A:B\equiv\tr(A^TB)$ denotes the Frobenius inner product of matrices
$A,B\in\R^{d\times d}$. Then for any $\phi\in C^2\bigl(\R^d\bigr)$,
It{\^o}'s formula implies that
\begin{equation}
  \E\Bigl(\phi\bigl(X_\xi(t)\bigr)\Bigr) - \phi(\xi)
   = \int_0^t\E\Bigl((\mathcal{L}\phi)\bigl(X_\xi(s)\bigr)\Bigr)\,ds\;,\label{eq:ito:est:phi}
\end{equation}
when additionally assuming that $\phi$, $f$, and $g$ are sufficiently
regular so that Fubini's theorem holds.

For the parametric inference problem we assume that both drift $f$ and
diffusion $G = gg^T$ depend on unknown parameters
$\theta=(\theta_1,\dots,\theta_n)^T\in\Theta = \R^n$, which we wish to
estimate from available data (i.e.\ observations). Specifically, we
consider the case where $f$ and $G$ can be expressed as a series
expansion using appropriate functions ${(f_j)}_{1\le j\le n}$ and
${(G_j)}_{1\le j \le n}$, respectively. That is, both drift function
and diffusion function depend linearly on $\theta$, so that
\begin{equation}
  f(x)\equiv f(x;\theta) := \sum_{j=1}^n\theta_jf_j(x)\quad\textrm{and}\quad
  G(x)\equiv G(x;\theta) := \sum_{j=1}^n\theta_jG_j(x)\;,\label{eq:parameterization}
\end{equation}
with $f_j\colon\R^d\rightarrow\R^d$ and
$G_j\colon\R^d\rightarrow\R^{d\times d}$ for $1\le j\le n$. Notice
that this parametric form does not imply that both the drift function
and the diffusion function have to depend on the same parameters,
because $f_j$ (or $G_j$) can vanish for suitable indices (see also
Section \ref{sec:numerics}). We also remark that the representation
\eqref{eq:parameterization} is always possible if $f$ and $G$ belong
to some finite dimensional vector space with basis functions $f_j$ and
$G_j$, respectively. For the numerical examples in Section
\ref{sec:numerics} we will typically take $f$ and $G$ to be
polynomials of some degree and use monomial basis functions. The
semiparametric representation \eqref{eq:parameterization} makes the
inference problem finite dimensional and will eventually lead to a
linear least squares problem.

Substituting the parametrization \eqref{eq:parameterization} into
\eqref{eq:ito:est:phi} and rearranging the terms, we find
\begin{equation}
  \E\Bigl(\phi\bigl(X_\xi(t)\bigr)\Bigr) - \phi(\xi) 
   = \sum_{j=1}^n\theta_j\int_0^t\E\Bigl((\mathcal{L}_j\phi)\bigl(X_\xi(s)\bigr)\Bigr)\,ds\;,
   \label{eq:ito:est:phi:param}
\end{equation}
where
$\mathcal{L}_j\phi := f_j\cdot\nabla\phi +
\frac{1}{2}G_j:\nabla\nabla\phi$. For any time $t\in [0,T]$ and any
function $\phi$ we define the local contribution functions
\begin{align*}
  b_c\colon\R^d\ni\xi\mapsto b_c(\xi) &\equiv b_c(\xi,t,\phi,X) :=  \E\Bigl(\phi\bigl(X_\xi(t)\bigr)\Bigr) - \phi(\xi) \in\R\;,\\
  a_c\colon\R^d\ni\xi\mapsto a_c(\xi) &\equiv a_c(\xi,t,\phi,X) :=
  {\biggl(\int_0^t\E\Bigl((\mathcal{L}_j\phi)\bigl(X_\xi(s)\bigr)\Bigr)\,ds\biggr)}_{1\le
    j\le n}\in\R^n\;,
\end{align*}
for the sake of notation. In fact, then equation
\eqref{eq:ito:est:phi:param} can be written as
\begin{equation}
  a_c(\xi)^T\theta = b_c(\xi)\;.\label{eq:fun:form:param:single}
\end{equation}
As equation \eqref{eq:fun:form:param:single} is under-determined for
$n>1$, we derive a well-defined estimator for $\theta$ by exploiting
the fact that equation \eqref{eq:fun:form:param:single} is valid for
any $\xi\in\R^d$. Specifically, by considering a finite sequence of
trial points ${(\xi_{i})}_{1\le i\le m}$ we find that $\theta$ solves
the linear system of equations
\begin{equation}
  A\theta = b\;,\label{eq:fun:form:param:sys}
\end{equation}
with matrix
$A := \bigl(a_c(\xi_{i})^T\bigr)_{1\le i\le m}\in\R^{m\times n}$ and
right-hand side $b:= \bigl(b_c(\xi_{i})\bigr)_{1\le i\le
  m}\in\R^{m}$. We emphasise that both the matrix $A$ and the
right-hand side $b$ depend on the considered trial points
$\Xi := {(\xi_i)}_{1\le i\le m}$, say, as well as $t$, $\phi$, and the
process $X$ solving \eqref{eq:sde}, that is $A\equiv A(X,t,\phi,\Xi)$
and $b\equiv b(X,t,\phi,\Xi)$.

In view of \eqref{eq:fun:form:param:sys}, the inference problem for
$\theta$ in a continuous setting reduces to solving a linear
system. As the matrix $A$ is typically singular, and the right-hand
side $b$ might not be in the range of $A$, we define the estimator of
$\theta$ based on $A$ and $b$ as the least squares solution of
$A\theta = b$ with minimum norm
\begin{equation}
  \hat{\theta} := \argmin_{x\in\mathcal{S}}\norm{x}_2^2\;,
  \quad\mathcal{S}:=\bigl\{x\in\R^n\colon \norm{Ax-b}_2^2 = \min\bigr\},\label{eq:param:lsp}
\end{equation}
equivalently written as $\hat{\theta} = A^{+}b$, with $A^{+}$ denoting
the pseudoinverse of $A$ \cite{Ben-Israel2003}. It is well known that
the least squares solution \eqref{eq:param:lsp} is always unique
\cite[Thm.\ $1.2.10$]{Bjorck1996}. Consequently, the estimator
$\hat{\theta}$ is well-defined. Notice that, by construction, the true
parameter $\theta$ satisfies equation \eqref{eq:fun:form:param:sys},
so that $\theta\in\mathcal{S}$.  However, $\theta \ne \hat\theta$ is
still possible, since there might be more than one element in $\R^n$
that minimises $x\mapsto\norm{Ax-b}_2^2$. This is due to the fact that
we solve the linear system in the least squares sense
\eqref{eq:param:lsp}; we will come back to this problem and its
consequences in Section \ref{sec:conv:errana}. Finally, we note that
we use $\Theta=\R^n$ throughout this work for simplicity. The case
$\Theta\subset\R^n$ results in a constrained least squares problem and
can be treated similarly; cf.\ \cite[Ch.\ $5$]{Bjorck1996}.

\subsection{Admissible functions}
Both the matrix $A$ and the right-hand side $b$ in equation
\eqref{eq:fun:form:param:sys} depend on the function $\phi$, so that
also the least squares estimator $\hat\theta$ depends on it. In the
formal derivation of \eqref{eq:param:lsp} above, we have not specified
the function $\phi$ yet, except assuming sufficient regularity. The
following definition makes the assumptions on $\phi$ concrete.
\begin{definition}
  \label{def:admissible}
  The \emph{space of admissible functions}, denoted by $V_n$, is
  defined as
  \begin{equation}
    V_n := C_b\bigl(\R^d\bigr)\cap
    \bigcap_{j=1}^n \bigl\{\varphi\in C^2\bigl(\R^d\bigr)\colon \mathcal{L}_j\varphi\in C_b\bigl(\R^d\bigr)\bigr\}\;,\label{eq:def:admissible}
  \end{equation}
  where $\mathcal{L}_j\varphi = f_j\cdot\nabla\varphi +
  \frac{1}{2}G_j:\nabla\nabla\varphi$, and the functions $f_j$ and
  $G_j$ are fixed by the considered parametrization
  \eqref{eq:parameterization}.
\end{definition}

The derivation of \eqref{eq:param:lsp} above is rigorous for any
$\phi\in V_n$, since in that case both It{\^o}'s formula and Fubini's
theorem (see e.g.\ \cite[Ch.\ III.$23$]{Bauer2001_eng}) are indeed
applicable. Moreover, the reason for considering only bounded
functions is due to the fact that this not only ensures all
expectations to be finite but, more importantly, will also yield
favourable properties of the estimation procedure when confronted with
weak perturbations. Finally, it is important to note that $V_n$ is
typically nonempty. To see this, consider for example the case that
all $f_j$ and $G_j$ are continuous functions satisfying polynomial
growth conditions, respectively. Then the function
$\exp{(-\norm{x}_2^2)}p(x)$, where $p$ is an arbitrary polynomial, is
an admissible function for example. We also remark that the set of
admissible functions $V_n$ defined in \eqref{eq:def:admissible} might
not be the largest possible class. It is, however, sufficient for our
purposes since we only need one element in $V_n$ to define the
estimator $\hat\theta$.

\subsection{Fully discretized estimation procedure}
\label{sec:estimator:fully:discrete}
In practice both the matrix $A$ and the right-hand side $b$ in the
definition of the least squares problem \eqref{eq:param:lsp} are not
readily available but can only be obtained approximately based on
available observations (i.e.\ in a data-driven fashion). Hence, using
these assembled approximations of $A$ and $b$ in \eqref{eq:param:lsp}
instead, introduces an error to the estimation
procedure. Specifically, the following different error sources are
considered here:
\begin{enumerate}
\item[(a)] Sampling errors in discretely sampled observations of a
  continuous time process. Let $\mathcal{T}_h$ be the time
  discretization of $[0,T]$, then, for any $\tau\in\mathcal{T}_h$,
  only the time discrete approximation $\bar{X}_{h\vert \xi}$
  corresponding to time step $h$ is available:
  \begin{equation*}
    \bar{X}_{h\vert \xi}(\tau) \approx X_{\xi}(\tau)\;.
  \end{equation*}
\item[(b)] Errors due to approximating time integrals by numerical
  quadrature. Here we resort to the trapezoidal rule due to its
  advantages over higher order methods for a ``rough'' integrand
  \cite{Cruz-Uribe2002}, but other quadrature rules are also
  possible. Specifically, let $Q_{n_\delta}^t$ denote the quadrature
  operator of the trapezoidal rule on $[0,t]$ with $n_\delta$ equally
  spaced ($\delta = t/n_\delta$) subdivisions, so that
  \begin{equation}
    \int_0^t \varphi(s)\,ds \approx \frac{\delta}{2} \Biggl( \varphi(0) + \varphi(t) + 2\sum_{k=1}^{n_\delta - 1}\varphi(k\delta) \Biggr) =: Q_{n_\delta}^t(\varphi) \;,\label{eq:quadrature:trapezoidal}
  \end{equation}
  for an appropriate function $\varphi\colon [0,t]\to\R$.
\item[(c)] Errors due to approximating expectations. For
  $\tau\in\mathcal{T}_h$ we use an approximation
  \begin{equation}
    \E\Bigr(\varphi\bigl(\bar{X}_{h\vert\xi}(\tau)\bigr)\Bigr)\approx \bar{u}_{h,N}(\tau,\xi;\varphi)\;,\label{eq:approx:E:std}
  \end{equation}
  for which the approximation error vanishes asymptotically in a
  probabilistic sense (e.g.\ almost surely). Here,
  $\bar{u}_{h,N}(\tau,\xi;\varphi)$ could be an appropriate ensemble
  average or time average, depending on the available observations
  (see Section \ref{sec:estimator:approx:expectation} below for
  details).
\end{enumerate}
For a fixed time $t\in[0,T]$, a sequence of trial points $\Xi$, and an
admissible function $\phi\in V_n$ the right-hand side $b$ in
\eqref{eq:param:lsp} is then approximated by
\begin{equation*}
  b_{h,N}:= \Bigl( \bar{u}_{h,N}(t,\xi_i;\phi) - \phi(\xi_i) \Bigr)_{1\le i\le m}\in\R^m\;,
\end{equation*}
while the matrix $A$ by
\begin{equation*}
  A_{\delta,h,N} := \bigl(a_{\delta,h,N}(\xi_{i})^T\bigr)_{1\le i\le
  m}\in\R^{m\times n}\;,\;\;
  a_{\delta,h,N}(\xi) := {\Bigl( Q_{n_\delta}^t\bigl( \bar{u}_{h,N}(\cdot,\xi;\mathcal{L}_j\phi)\bigr) \Bigr)}_{1\le j\le n}\in\R^n\;.
\end{equation*}
The fully discretized estimation procedure is then given by
\begin{equation*}
  \hat{\theta}_{\delta,h,N} := {(A_{\delta,h,N})}^{+}b_{h,N}\;,
\end{equation*}
accordingly. To emphasise the dependency of the estimated value
$\hat{\theta}_{\delta,h,N}$ on the used observations, we will
occasionally use
\begin{equation}
  \hat{\theta}_{\delta,h,N} = {\bigl(A_{\delta,h,N}(X)\bigr)}^{+}b_{h,N}(X)=: \Lambda_\lambda(X)\;,\label{eq:param:lsp:approx:std}
\end{equation}
with $\lambda = (\delta,h,N)$, corresponding to the notation
introduced in Section \ref{sec:setting}.

\subsection{Approximating expectations from observations}
\label{sec:estimator:approx:expectation}
An important task when using the described estimation procedure for
discrete time observations is to approximate expectations from
available observations. More precisely, let $X_\xi(t)$ denote a
generic diffusion process at time $t\in[0,T]$ started at $\xi$ and
recall that $\mathcal{T}_h$ denotes a time discretization of
$[0,T]$. Furthermore, let $\bar{X}_{h\vert\xi}(\tau)$,
$\tau\in\mathcal{T}_h$, denote a time discrete approximation of
$X_\xi$. To obtain the estimated value
\eqref{eq:param:lsp:approx:std}, expectations of the form
$\E\bigr(\varphi\bigl(\bar{X}_{h\vert\xi}(\tau)\bigr)\bigr)$ for
$\varphi\in C_b(\R^d)$ need to be approximated. The choice of the
approximation depends on the design of the available observations. In
the following we consider two different observation designs: firstly
we discuss the situation when an ensemble of short trajectories is
available, and secondly the case when only one long trajectory of
observations (i.e.\ a time series) is available. We will exemplify an
approximation of the expectation in each case.

\subsubsection{Ensemble of short trajectories}
Let us first consider the case where an ensemble of independent and
identically distributed (i.i.d.) observations is available. That is,
for $h>0$ and trial point $\xi\in\R^d$ we have access to
$\bar{X}_{h\vert\xi}^{(1)}(\tau),\bar{X}_{h\vert\xi}^{(2)}(\tau),\dots$,
where $\tau\in\mathcal{T}_h$. A natural approximation of
$\E\bigr(\varphi\bigl(\bar{X}_{h\vert\xi}(\tau)\bigr)\bigr)$ with
$\varphi\in C_b(\R^d)$ is then given via an ensemble average:
\begin{equation}
  \bar{u}_{h,N}(\tau,\xi;\varphi) := \frac{1}{N}\sum_{k=1}^N\varphi\Bigl(\bar{X}_{h\vert\xi}^{(k)}(\tau)\Bigr)\;.
  \label{eq:approx:exp:ensemble}
\end{equation}
In view of the strong law of large numbers, we have the following
convergence result.
\begin{proposition}
  Let $h>0$, $\tau\in\mathcal{T}_h$, and $\xi\in\R^d$. Moreover, let
  the sequence ${\Bigl(\bar{X}_{h\vert\xi}^{(k)}(\tau)\Bigr)}_{k\ge
    1}$ be i.i.d.\ and let $\varphi\in C_b(\R^d)$. For the
  approximation \eqref{eq:approx:exp:ensemble} it then holds that
  \begin{equation*}
    \bar{u}_{h,N}(\tau,\xi;\varphi) \rightarrow \E\Bigr(\varphi\bigl(\bar{X}_{h\vert\xi}(\tau)\bigr)\Bigr)\quad\text{a.s.}\;,
  \end{equation*}
  as $N\rightarrow\infty$.
\end{proposition}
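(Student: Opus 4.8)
The plan is to invoke Kolmogorov's strong law of large numbers. First I would observe that, for the fixed data $h$, $\tau$, $\xi$, and the fixed function $\varphi$, the approximation $\bar{u}_{h,N}(\tau,\xi;\varphi)$ defined in \eqref{eq:approx:exp:ensemble} is precisely the empirical mean $\frac{1}{N}\sum_{k=1}^N Y_k$ of the random variables $Y_k := \varphi\bigl(\bar{X}_{h\vert\xi}^{(k)}(\tau)\bigr)$, $k\ge 1$. Since the sequence $\bigl(\bar{X}_{h\vert\xi}^{(k)}(\tau)\bigr)_{k\ge 1}$ is i.i.d.\ by hypothesis and $\varphi$ is a fixed (measurable) function, the $Y_k$ are themselves i.i.d.

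Next I would check the integrability hypothesis required by the strong law: because $\varphi\in C_b(\R^d)$ we have $\abs{Y_k}\le\norm{\varphi}_\infty<\infty$ almost surely, so in particular $\E\abs{Y_1}<\infty$. Kolmogorov's SLLN then yields $\frac{1}{N}\sum_{k=1}^N Y_k\to\E(Y_1)$ almost surely as $N\to\infty$. Finally, since $\bar{X}_{h\vert\xi}^{(1)}(\tau)$ has the same law as $\bar{X}_{h\vert\xi}(\tau)$, one has $\E(Y_1)=\E\bigl(\varphi(\bar{X}_{h\vert\xi}(\tau))\bigr)$, which is exactly the claimed limit; combining the last two facts proves the assertion.

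There is essentially no obstacle here: the only point to verify beyond a verbatim appeal to the SLLN is the finiteness of the first moment of $Y_1$, and this is immediate from the boundedness of $\varphi$. (If one preferred to avoid citing Kolmogorov's SLLN, the uniform boundedness of the $Y_k$ makes all of their moments finite, so the elementary fourth-moment version of the strong law via the Borel--Cantelli lemma would already suffice.)
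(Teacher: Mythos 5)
Your proof is correct and follows exactly the route the paper intends: the paper states this result as an immediate consequence of the strong law of large numbers applied to the i.i.d.\ bounded random variables $\varphi\bigl(\bar{X}_{h\vert\xi}^{(k)}(\tau)\bigr)$, which is precisely your argument. The integrability check via $\norm{\varphi}_\infty<\infty$ is the only detail worth spelling out, and you have done so.
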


This observation design is common for many computer-based simulations
and experiments, such as, e.g., computational statistical physics, but
also some real word experiments can be cast into this framework.

\subsubsection{One long trajectory}
An observational design more prevalent in real world experiments is
when only one long trajectory of discrete time observations (i.e.\ a
time series) is available. That is, we have access to
$\bar{X}_{h}(t_1),\bar{X}_{h}(t_2),\dots$, with $0\le t_1<t_2<\dots$,
and $t_k\in\mathcal{T}_h$, $h>0$. Here we dropped the subscript for
the initial condition of the observations, since there is only one
initial condition which we cannot influence. Instead, we will obtain
an approximation of
$\E\bigr(\varphi\bigl(\bar{X}_{h\vert\xi}(\tau)\bigr)\bigr)$ by
searching the trajectory for the value of the trial point
$\xi\in\R^d$. Due to mutual dependencies between the observations in
this setting and the fact that we have to search the time series for
the value of $\xi$, we cannot expect to obtain an accurate
approximation with as little assumptions on the time discrete process
as in the ensemble case above. One technique that is nonetheless
applicable are so-called local polynomial kernel regression estimators
\cite{Fan2003,Tsybakov2009}. In the simplest case this yields the
approximation
\begin{equation}
  \bar{u}_{h,N}(\tau,\xi;\varphi) := \frac{\sum_{k=1}^N\varphi\bigl(\bar{X}_{h\vert\xi}(t_k+\tau)\bigr)K\Bigl(\frac{\bar{X}_{h\vert\xi}(t_k)-\xi}{\kappa_N}\Bigr)}{\sum_{k=1}^NK\Bigr(\frac{\bar{X}_{h\vert\xi}(t_k)-\xi}{\kappa_N}\Bigr)}\;,
 \label{eq:approx:exp:mixing}
\end{equation}
which is also known as the Nadaraya--Watson estimator
\cite{Nadaraya1964,Watson1964}. Therein $K\colon\R^d\rightarrow\R$ is
an appropriately chosen kernel and $\kappa_N>0$ denotes the bandwidth
which depends on the length $N$ of the available time
series. Throughout this work we select the Gaussian kernel
$K(x) := (2\pi)^{-d/2}\exp{(-\norm{x}_2^2/2)}$ in
\eqref{eq:approx:exp:mixing} for convenience, but we remark that other
choices are also possible.
\begin{remark}
  When defining
  $w_{N_\tau,k}(\xi) :=
  K\bigl((\bar{X}_{h\vert\xi}(t_k)-\xi)/\kappa_N\bigr)/
  \sum_{k=1}^NK\bigr((\bar{X}_{h\vert\xi}(t_k)-\xi)/\kappa_N)\bigr)$,
  one can rewrite the right-hand side in \eqref{eq:approx:exp:mixing},
  as
  $\sum_{k=1}^{N_\tau}w_{N_\tau,i}(\xi)\varphi\bigl(\bar{X}_{h\vert\xi}(t_k+\tau)\bigr)$.
  The regression estimator is thus given as a weighted average with
  non-identical weights $w_{N_\tau,k}(\xi)$. We also note that if the
  trial point $\xi$ is such that denominator in
  \eqref{eq:approx:exp:mixing} is zero, then we simply set
  $w_{N_\tau,k}(\xi) = 1/N$ for well-posedness instead.
\end{remark}

For the Gaussian kernel and under suitable conditions on the degree of
dependency of the observations, we have the following convergence
result \cite[Thm.\ $3.2$]{Bosq1998}.
\begin{proposition}
  \label{prop:conv:expect:mixing}
  Let ${\bigl(\bar{X}_{h}(t_k)\bigr)}_{k\ge 1}$ be a strictly
  stationary (discrete time) Markov process with density $p\in
  C_b^2(\R^d)$ such that $\norm{\partial_{x_i}\partial_{x_j}p}_\infty
  \le L<\infty$, for any $1\le i,j\le d$. Furthermore, let
  ${\bigl(\bar{X}_{h}(t_k)\bigr)}_{k\ge 1}$ be geometrically
  $\alpha$-mixing in the sense that
  \begin{equation*}
   \mathop{\sup_{B\in\sigma(\bar{X}_{h}(t_1))}}_{C\in\sigma(\bar{X}_{h}(t_{1+k}))}{\abs{\PP(B\cap C)-\PP(B)\PP(C)}}\le c\rho^k\;,
  \end{equation*}
  for some $\rho\in[0,1[$ and $c>0$. Let $\varphi\in C_b(\R^d)$,
  $\tau\in\mathcal{T}_h$, and $\xi\in\supp{(p)}$. If
  $\kappa_N\rightarrow 0$ at a rate such that
  $\kappa_N^dN/{\ln{(N)}}^{(2+1/\nu)}\rightarrow\infty$ as
  $N\rightarrow\infty$ for some $0<\nu<\infty$, then the approximation
  \eqref{eq:approx:exp:mixing} satisfies
\begin{equation*}
  \bar{u}_{h,N}(\tau,\xi;\varphi) \rightarrow \E\Bigr(\varphi\bigl(\bar{X}_{h\vert\xi}(\tau)\bigr)\Bigr)\quad\text{a.s.}\;,
\end{equation*}
as $N\rightarrow\infty$.
\end{proposition}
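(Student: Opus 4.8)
The plan is to recognize $\bar{u}_{h,N}(\tau,\xi;\varphi)$ in \eqref{eq:approx:exp:mixing} as a local-constant (Nadaraya--Watson) kernel regression estimator and to reduce the claim to the strong consistency result \cite[Thm.\ $3.2$]{Bosq1998}. First I would identify the limit. In the present setting a single trajectory is available, so write $Z_k := \bar{X}_h(t_k)$ and $Y_k := \varphi\bigl(\bar{X}_h(t_k+\tau)\bigr)$; since $\tau$ corresponds to a fixed number of steps on $\mathcal{T}_h$, the pair $(Z_k,Y_k)$ is a fixed-lag functional of the Markov chain $(\bar{X}_h(t_k))_{k\ge1}$ and is therefore strictly stationary. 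By the Markov property and stationarity, the conditional law of $\bar{X}_h(t_k+\tau)$ given $\bar{X}_h(t_k)=\xi$ is the law of $\bar{X}_{h\vert\xi}(\tau)$, so the quantity to be approximated equals the regression function
\begin{equation*}
  r(\xi) := \E\bigl(Y_1\mid Z_1=\xi\bigr) = \E\Bigl(\varphi\bigl(\bar{X}_{h\vert\xi}(\tau)\bigr)\Bigr)\;,
\end{equation*}
and $\bar{u}_{h,N}(\tau,\xi;\varphi)$ is precisely the kernel regression estimator of $r$ built from $\{(Z_k,Y_k)\}_{k=1}^N$ with Gaussian kernel and bandwidth $\kappa_N$, evaluated at $\xi$.

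The heart of the proof is then verifying the hypotheses of \cite[Thm.\ $3.2$]{Bosq1998} for this estimator. The relevant points are: the Gaussian kernel is bounded, symmetric, Lipschitz, integrable and integrates to one; the response $Y_k$ is bounded since $\varphi\in C_b(\R^d)$; the design density is $p\in C_b^2(\R^d)$ with $\norm{\partial_{x_i}\partial_{x_j}p}_\infty\le L$, which furnishes the required smoothness of both $p$ and $g:=rp$; the pair sequence $(Z_k,Y_k)_{k\ge1}$ inherits geometric $\alpha$-mixing from $(\bar{X}_h(t_k))_{k\ge1}$, because $\sigma(Z_k,Y_k)$ and $\sigma(Z_{k+j},Y_{k+j})$ are generated by two fixed-length blocks of the chain whose separation grows linearly in $j$, so by the Markov property the corresponding mixing coefficient is bounded by a constant multiple of $\rho^j$ and the geometric rate is retained; and the assumed bandwidth rate $\kappa_N^dN/{\ln{(N)}}^{(2+1/\nu)}\to\infty$ is exactly the condition under which that theorem delivers almost sure convergence. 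Unpacking the mechanism, one decomposes $\bar{u}_{h,N}=g_N/f_N$ with $f_N(\xi)=(N\kappa_N^d)^{-1}\sum_{k=1}^N K\bigl((Z_k-\xi)/\kappa_N\bigr)$ and $g_N(\xi)=(N\kappa_N^d)^{-1}\sum_{k=1}^N Y_k K\bigl((Z_k-\xi)/\kappa_N\bigr)$ the kernel estimators of $p$ and $g=rp$ (the $\kappa_N^{-d}$ normalizations cancel, so this is the same quantity as in \eqref{eq:approx:exp:mixing}); the cited result treats each via a Bernstein-type exponential inequality for $\alpha$-mixing partial sums combined with Borel--Cantelli, yielding $f_N(\xi)\to p(\xi)$ and $g_N(\xi)\to r(\xi)p(\xi)$ almost surely.

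It then remains to pass to the quotient. As $\xi\in\supp(p)$ we have $p(\xi)>0$, so on the almost sure event on which $f_N(\xi)\to p(\xi)$ the denominator is positive for all $N$ large enough; in particular the degenerate case of a vanishing denominator in \eqref{eq:approx:exp:mixing} is eventually not encountered, and on the intersection of the two almost sure events above
\begin{equation*}
  \bar{u}_{h,N}(\tau,\xi;\varphi)=\frac{g_N(\xi)}{f_N(\xi)}\longrightarrow\frac{r(\xi)p(\xi)}{p(\xi)}=\E\Bigl(\varphi\bigl(\bar{X}_{h\vert\xi}(\tau)\bigr)\Bigr)\qquad(N\to\infty)\;,
\end{equation*}
which is the assertion. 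I expect the main obstacle to be not the limit identification nor the ratio step but the careful matching of the hypotheses of \cite[Thm.\ $3.2$]{Bosq1998} to the situation at hand --- in particular verifying that the fixed-lag pair sequence retains the \emph{geometric} $\alpha$-mixing rate and that the bounded-response regression form of that theorem applies under only the stated $C_b^2$ regularity of the density $p$ --- whereas the quantitative core of the statement (the exponential inequality together with Borel--Cantelli, which is what forces the ${\ln{(N)}}^{(2+1/\nu)}$ factor in the bandwidth condition) is supplied by \cite{Bosq1998} and need not be reproved here.
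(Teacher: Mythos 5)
Your proposal is correct and follows essentially the same route as the paper, which offers no written proof beyond the citation to \cite[Thm.\ $3.2$]{Bosq1998}: you identify \eqref{eq:approx:exp:mixing} as the Nadaraya--Watson estimator of the regression function $\xi\mapsto\E\bigl(\varphi\bigl(\bar{X}_{h\vert\xi}(\tau)\bigr)\bigr)$ and reduce the claim to that strong-consistency theorem. Your additional care in checking that the fixed-lag pair sequence inherits the geometric $\alpha$-mixing rate and that $p(\xi)>0$ on $\supp(p)$ justifies the quotient step is exactly the hypothesis-matching the paper leaves implicit.
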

\begin{remark}
  In Proposition \ref{prop:conv:expect:mixing}, the rather technical
  $\alpha$-mixing condition on the degree of dependency of the
  observations ensures that various covariance terms can be controlled
  \cite[Ch.\ $7.2$]{Ethier1986}. Specifically, it implies that
  $\cov{\bigl(\bar{X}_h(t_1),\bar{X}_h(t_{1+k})\bigr)} \le C \rho^k$,
  for some finite $C\equiv C_h>0$ and $\rho\in[0,1[$. Related
  conditions on the covariance structure as a function of the lag $k$
  have also been used in other works on parametric inference for
  diffusion processes; see e.g.\ \cite{Azencott2011}.
\end{remark}

%
%
\section{Error Analysis for the Estimation Procedure}
\label{sec:conv}
We now analyse the estimation procedure introduced in Section
\ref{sec:estimator} concerning its convergence properties.

\subsection{Setting and Assumptions}
\label{sec:conv:setup}
Let $X$ denote the solution to the diffusion process \eqref{eq:sde} on
the time interval $[0,T]$ corresponding to the parameter
$\theta\in\Theta = \R^n$ in parametrization
\eqref{eq:parameterization}. For a fixed time $t\in [0,T]$, a sequence
of trial points $\Xi$, and an admissible function $\phi\in V_n$,
recall that $\hat{\theta}_{\delta,h,N}$ denotes the estimated value
for $\theta$ based on $X$; see \eqref{eq:param:lsp:approx:std}. That
is, in terms of the notation introduced in Section \ref{sec:setting}
we have $ \hat{\theta}_{\delta,h,N} = \Lambda_\lambda(X)$, with
$\lambda = (\delta,h,N)$. Moreover, let $X^\varepsilon$ be a weak
perturbation of $X$ and denote by
$\hat{\theta}_{\delta,h,N}^\varepsilon$ the estimated value
\eqref{eq:param:lsp:approx:std}, which is based on the observation
$X^\varepsilon$ instead of $X$:
\begin{equation}
  \hat{\theta}_{\delta,h,N}^\varepsilon := \Lambda_\lambda(X^\varepsilon)\;.\label{eq:param:lsp:approx:weak}
\end{equation}
As discussed in Section \ref{sec:estimator:fully:discrete}, the
estimation procedure is subject to different error sources. In the
following we impose assumptions to characterise these error
contributions. We begin by characterising both the accuracy of the
available discretely sampled observations and the time discretization
itself.
\begin{assumption}[Time discrete observations]
  \label{assumption:discretization:weak:convergence}
  For any $t\in[0,T]$, let $\mathcal{T}_h$ be an equidistant time
  discretization of $[0,t]$, in the sense that
  $\mathcal{T}_h = \{0,h,2h,\dots ,n_th\}$, for $h>0$ and $n_t\in\N$
  such that $t = n_t h$. The time discrete approximation
  $\bar{X}_{h\vert\xi}^\varepsilon$ corresponding to a time step $h$
  \emph{converges weakly} to $X_\xi^\varepsilon$ at time
  $\tau\in\mathcal{T}_h$ as $h\rightarrow 0$, in the sense that for
  any $\varphi\in C_P^{2+\beta}(\R^d)$, $\beta>0$ arbitrary, and any
  $\xi\in\Xi$ we have that
  \begin{equation*}
    \lim_{h\rightarrow 0} \abs{\E\Bigl(\varphi\bigl(X_\xi^\varepsilon(\tau)\bigr)\Bigr) - 
      \E\Bigl(\varphi\bigl(\bar{X}_{h\vert\xi}^\varepsilon(\tau)\bigr)\Bigr) } = 0\;.
  \end{equation*}
  Here, $C_P^k(\R^d)$ denotes the subspace of $C^k(\R^d)$, such that
  the functions, together with all their partial derivatives of orders
  smaller or equal to $k$, have at most polynomial growth.
\end{assumption}

The weak convergence assumption for the time discrete approximation is
standard and well-understood for a large class of SDEs; see
\cite{Kloeden1992}. Essentially, Assumption
\ref{assumption:discretization:weak:convergence} ensures that the
discrete time observations provide a certain accuracy. The error
contribution due to approximation of expectations, which is also
standard, is characterised next.
\begin{assumption}[Approximation of expectation]
  \label{assumption:approx:expectation}
  Let $\tau\in\mathcal{T}_h$, $h>0$, and $\xi\in\Xi$. For any
  $\varphi\in C_b(\R^d)$, the approximation
  $\bar{u}_{h,N}^\varepsilon(\tau,\xi;\varphi)$ converges almost
  surely to $\bar{u}_h^\varepsilon(\tau,\xi;\varphi) :=
  \E\bigl(\varphi\bigl(\bar{X}_{h\vert\xi}^\varepsilon(\tau)\bigr)\bigr)$
  as $N\rightarrow\infty$.
\end{assumption}

Notice that both ensemble and single trajectory based averages
$\bar{u}_{h,N}^\varepsilon$ are covered by Assumption
\ref{assumption:approx:expectation} (see Section
\ref{sec:estimator:approx:expectation} for details). Finally, we
impose a time regularity condition on the expectations, so that the
convergence of the trapezoidal rule is guaranteed.
\begin{assumption}[Approximation of time integral]
  \label{assumption:approx:integral}
  For any $\varphi\in C_b(\R^d)$, the function $t\mapsto
  \E\bigl(\varphi\bigl(X_\xi^\varepsilon(t)\bigr)\bigr) \equiv
  u^\varepsilon(t,\xi;\varphi)$ is such that
  $Q_{n_\delta}^t\bigl(u^\varepsilon(\cdot,\xi;\varphi)\bigr)$
  converges to $\int_0^t u^\varepsilon(s,\xi;\varphi)\,ds$ as
  $n_\delta\rightarrow\infty$ (or equivalently as $\delta\rightarrow
  0$, recalling that $t=\delta n_\delta$) for any fixed $t\in[0,T]$
  and any $\xi\in\Xi$.
\end{assumption}
\begin{remark}
  A sufficient condition for the convergence of the trapezoidal rule
  $Q_{n_\delta}^t$ is for the function
  $t\mapsto \E\bigl(\varphi\bigl(X_\xi^\varepsilon(t)\bigr)\bigr)$ to
  be at H{\"o}lder continuous with exponent $\alpha >0$ on $[0,t]$;
  cf.\ \cite{Cruz-Uribe2002}.
\end{remark}

In view of the introduced notations above, and omitting the dependency
on $X^\varepsilon$, the fully discretized estimator based on perturbed
input data, i.e.\ \eqref{eq:param:lsp:approx:weak}, can then
explicitly written as
\begin{equation}
  \hat{\theta}_{h,\delta,N}^\varepsilon = {(A_{h,\delta,N}^\varepsilon)}^{+}b_{h,N}^\varepsilon\;.\label{eq:param:lsp:approx:weak:explicit}
\end{equation}
In fact, therein the data-driven approximation of the right-hand side
$b$ is given by
\begin{equation*}
  b_{h,N}^\varepsilon := \Bigl( \bar{u}_{h,N}^\varepsilon(t,\xi_i;\phi) - \phi(\xi_i) \Bigr)_{1\le i\le m}\in\R^m\;,
\end{equation*}
while the data-driven approximation of the matrix $A$ by
\begin{equation*}
  A_{\delta,h,N}^\varepsilon := \bigl(a_{\delta,h,N}^\varepsilon(\xi_{i})^T\bigr)_{1\le i\le
    m}\in\R^{m\times n}\;,\;
  a_{\delta,h,N}^\varepsilon(\xi) := \Bigl( Q_{n_\delta}^t\bigl( \bar{u}_{h,N}^\varepsilon(\cdot,\xi;\mathcal{L}_j\phi)\bigr) \Bigr)_{1\le j\le n}\in\R^n\;,
\end{equation*}
accordingly.

\subsection{Convergence property}
\label{sec:conv:errana}
In view of Definition \ref{def:est:conv} the key property of the
estimation procedure for a numerically feasible result is that the
error $\norm{\theta-\hat{\theta}_{\delta,h,N}^\varepsilon}_2$ vanishes
asymptotically. Upon recalling that $\theta\in\Theta$ denotes the true
parameter in \eqref{eq:sde}, while
$\hat\theta_{\delta,h,N}^\varepsilon$ is the estimated value based on
$X^\varepsilon$ (i.e.\ given by \eqref{eq:param:lsp:approx:weak}), one
can divide the error into two parts
\begin{equation}
  \norm{\theta - \hat\theta_{\delta,h,N}^\varepsilon}_2 \le \norm{\theta - \hat\theta}_2 + 
  \norm{\hat\theta- \hat\theta_{\delta,h,N}^\varepsilon}_2\;,\label{eq:decomp:error}
\end{equation} 
where $\hat\theta$ solves \eqref{eq:param:lsp}. The first part
accounts for the error introduced by solving
\eqref{eq:fun:form:param:sys} in the least-squares sense which is not
affected by any other error sources. Hence, it vanishes if the
estimation procedure is model consistent. The second part in
\eqref{eq:decomp:error} measures the effect of the different error
contributions as well as the influence of using a weak perturbation
$X^\varepsilon$ of $X$ as input. Instead of decomposing the second
term further into one term reflecting the $\varepsilon$-stability and
one term characterising the numerical consistency, we will study the
second term in \eqref{eq:decomp:error} directly and address the
$\varepsilon$-stability and consistency concepts in Corollary
\ref{coroll:asymp:unbiased} afterwards.

For notational convenience and to facilitate the presentation of the
proofs that follow, we introduce
\begin{equation*}
  u(t,\xi;\varphi) := \E\Bigl(\varphi\bigl(X_\xi(t)\bigr)\Bigr)\;,\quad
  u^\varepsilon(t,\xi;\varphi) := \E\Bigl(\varphi\bigl(X_\xi^\varepsilon(t)\bigr)\Bigr)\;,
\end{equation*}
and, for any discretization time $\tau\in\mathcal{T}_h$,
\begin{equation*}
  \bar{u}_h^\varepsilon(\tau,\xi;\varphi) := \E\Bigl(\varphi\bigl(\bar{X}_{h\vert\xi}^\varepsilon(\tau)\bigr)\Bigr)\;.
\end{equation*}
Moreover, we recall that $\Xi={\bigl(\xi_i\bigr)}_{1\le i\le m}$
denotes the collection of considered trial points and $V_n$ is the
space of admissible functions introduced in
Definition~\ref{def:admissible}. Now we are in the position to state
the main results concerning convergence of the estimator introduced in
Section \ref{sec:estimator}.
\begin{proposition}
  \label{prop:asymp:unbiased}
  Let $X$ be the solution to \eqref{eq:sde} corresponding to the true
  parameter $\theta\in\Theta=\R^n$ in
  \eqref{eq:parameterization}. Moreover, let $\Xi$ and $\phi\in
  V_n\cap C_P^{2+\beta}(\R^d)$, for some $\beta>0$, be such that
  $\rank{(A)} = \min(m,n)$. Then, for any $t\in[0,T]$
  \begin{equation}
    \lim_{\varepsilon\rightarrow 0} \lim_{\delta\rightarrow 0} \lim_{h\rightarrow 0} \lim_{N\rightarrow\infty} 
    \norm{\hat\theta_{\delta,h,N}^\varepsilon  - \hat\theta}_2 = 0\;,\quad\textrm{a.s.}\label{eq:estimator:limit:stable}
  \end{equation}
  for any weak perturbation $X^\varepsilon$ of $X$, provided
  $X^\varepsilon$ is such that Assumptions
  \ref{assumption:discretization:weak:convergence},
  \ref{assumption:approx:expectation}, and
  \ref{assumption:approx:integral} hold for sufficiently small
  $\varepsilon>0$.

  If, moreover, $\Xi$ and $\phi\in V_n\cap C_P^{2+\beta}(\R^d)$ are
  such that $\rank{(A)} = n$, then the estimation procedure is
  convergent:
  \begin{equation}
    \lim_{\varepsilon\rightarrow 0} \lim_{\delta\rightarrow 0} \lim_{h\rightarrow 0} \lim_{N\rightarrow\infty} 
    \norm{\hat\theta_{\delta,h,N}^\varepsilon  - \theta}_2 = 0\;,\quad\textrm{a.s.}\label{eq:estimator:limit:unbiased}
  \end{equation}
\end{proposition}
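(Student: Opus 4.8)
The plan is to establish \eqref{eq:estimator:limit:stable} by passing to the four limits one at a time, in the prescribed order, exploiting that after the innermost ($N\to\infty$) limit every object is deterministic, so that the remaining three limits reduce to deterministic convergence of matrices and vectors together with continuity of the pseudoinverse. The backbone is the pair of convergence chains
\begin{equation*}
  A_{\delta,h,N}^\varepsilon \xrightarrow{N\to\infty} A_{\delta,h}^\varepsilon \xrightarrow{h\to 0} A_\delta^\varepsilon \xrightarrow{\delta\to 0} A^\varepsilon \xrightarrow{\varepsilon\to 0} A\;,\qquad
  b_{h,N}^\varepsilon \xrightarrow{N\to\infty} b_h^\varepsilon \xrightarrow{h\to 0} b^\varepsilon \xrightarrow{\varepsilon\to 0} b\;,
\end{equation*}
in which $b^\varepsilon$ is independent of $\delta$ and the intermediate objects are obtained by successively replacing the data-driven average $\bar u_{h,N}^\varepsilon$ by the exact discrete expectation $\bar u_h^\varepsilon$, then $\bar u_h^\varepsilon$ by the continuous-time expectation $u^\varepsilon$, then the quadrature $Q_{n_\delta}^t$ by $\int_0^t$, and finally $u^\varepsilon$ by $u$; e.g.\ $A^\varepsilon=\bigl(\int_0^t u^\varepsilon(s,\xi_i;\mathcal{L}_j\phi)\,ds\bigr)_{i,j}$ and $b^\varepsilon=\bigl(u^\varepsilon(t,\xi_i;\phi)-\phi(\xi_i)\bigr)_i$. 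Given both chains, $\hat\theta_{\delta,h,N}^\varepsilon=(A_{\delta,h,N}^\varepsilon)^+b_{h,N}^\varepsilon$ is driven to $A^+b=\hat\theta$ by continuity of $(M,v)\mapsto M^+v$.

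The individual convergences would be justified as follows. The $N$-limits hold almost surely by Assumption~\ref{assumption:approx:expectation}, applied to the finitely many functions $\phi,\mathcal{L}_1\phi,\dots,\mathcal{L}_n\phi$ (each in $C_b(\R^d)$ because $\phi\in V_n$), the finitely many trial points $\xi_i$, and the finitely many quadrature nodes $k\delta$, with the exceptional null sets intersected over a countable refinement family. The $h$-limits are deterministic and rest on Assumption~\ref{assumption:discretization:weak:convergence}: since $\phi\in C_P^{2+\beta}(\R^d)$ and --- under the standing regularity of $\phi$ and of the basis functions $f_j$, $G_j$ --- also $\mathcal{L}_j\phi\in C_P^{2+\beta}(\R^d)$, one gets $\bar u_h^\varepsilon(\tau,\xi_i;\phi)\to u^\varepsilon(\tau,\xi_i;\phi)$ and $\bar u_h^\varepsilon(k\delta,\xi_i;\mathcal{L}_j\phi)\to u^\varepsilon(k\delta,\xi_i;\mathcal{L}_j\phi)$, hence $b_h^\varepsilon\to b^\varepsilon$ and, the trapezoidal weights being fixed, $A_{\delta,h}^\varepsilon\to A_\delta^\varepsilon$. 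The $\delta$-limit $A_\delta^\varepsilon\to A^\varepsilon$ is precisely Assumption~\ref{assumption:approx:integral}. Finally, the $\varepsilon$-limits use that $X^\varepsilon$ is a weak perturbation of $X$: with $\phi\in C_b(\R^d)$ one obtains $u^\varepsilon(t,\xi_i;\phi)\to u(t,\xi_i;\phi)$, and with $\mathcal{L}_j\phi\in C_b(\R^d)$ Definition~\ref{def:input:perturbation} gives $\sup_{s\in[0,T]}\abs{u^\varepsilon(s,\xi_i;\mathcal{L}_j\phi)-u(s,\xi_i;\mathcal{L}_j\phi)}\to 0$, so that $\int_0^t u^\varepsilon(s,\xi_i;\mathcal{L}_j\phi)\,ds\to\int_0^t u(s,\xi_i;\mathcal{L}_j\phi)\,ds$, i.e.\ $A^\varepsilon\to A$ and $b^\varepsilon\to b$.

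To upgrade these to convergence of $\hat\theta_{\delta,h,N}^\varepsilon$ I would use the standard fact (cf.\ \cite{Ben-Israel2003}) that $M\mapsto M^+$ is continuous along any sequence $M_k\to M_0$ whose members eventually all have rank $\rank(M_0)$, combined with lower semicontinuity of the rank. Since $\rank(A)=\min(m,n)$ is the maximal attainable value, every matrix sufficiently close to $A$ has rank $\min(m,n)$; unwinding the nested limits from the outside inward, one picks $\varepsilon$ small enough that $\rank(A^\varepsilon)=\min(m,n)$ (so $(A^\varepsilon)^+\to A^+$), then $\delta$ small enough that $\rank(A_\delta^\varepsilon)=\min(m,n)$ (so $(A_\delta^\varepsilon)^+\to(A^\varepsilon)^+$), then $h$ small enough that $\rank(A_{\delta,h}^\varepsilon)=\min(m,n)$, and then $\rank(A_{\delta,h,N}^\varepsilon)=\min(m,n)$ for all large $N$ almost surely, whence $(A_{\delta,h,N}^\varepsilon)^+\to(A_{\delta,h}^\varepsilon)^+$. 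The first three thresholds are deterministic and only the innermost one is random, which is consistent with an almost sure conclusion. Combining with $b_{h,N}^\varepsilon\to b$ in the iterated limit yields $\hat\theta_{\delta,h,N}^\varepsilon\to\hat\theta$, which is \eqref{eq:estimator:limit:stable}.

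For the second assertion it is enough to show $\hat\theta=\theta$ under the stronger rank condition. By construction $A\theta=b$ holds exactly, so $\min_{x}\norm{Ax-b}_2^2=0$ and $\mathcal{S}=\theta+\ker A$; if $\rank(A)=n$ then $\ker A=\{0\}$, hence $\mathcal{S}=\{\theta\}$ and $\hat\theta=A^+b=\theta$. Thus $\norm{\hat\theta_{\delta,h,N}^\varepsilon-\theta}_2=\norm{\hat\theta_{\delta,h,N}^\varepsilon-\hat\theta}_2$ and \eqref{eq:estimator:limit:unbiased} follows from \eqref{eq:estimator:limit:stable} --- equivalently, in the decomposition \eqref{eq:decomp:error} the hypothesis $\rank(A)=n$ is nothing but model consistency, $\norm{\theta-\hat\theta}_2=0$. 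I expect the main obstacle to be the handling of the pseudoinverse: because $M\mapsto M^+$ is genuinely discontinuous at rank-deficient matrices, the whole argument hinges on propagating the ``eventually maximal rank'' property inward from $A$ to $A_{\delta,h,N}^\varepsilon$, which dictates both why the four limits must be taken in the stated order and why the maximality $\rank(A)=\min(m,n)$ (rather than a fixed smaller rank) is assumed. A minor secondary point is the regularity bookkeeping needed to apply Assumption~\ref{assumption:discretization:weak:convergence} to the functions $\mathcal{L}_j\phi$ rather than to $\phi$.
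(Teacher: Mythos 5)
Your proof is correct, and its first half --- the telescoping of $A_{\delta,h,N}^\varepsilon-A$ and $b_{h,N}^\varepsilon-b$ into a weak-perturbation term, a time-discretization term, a quadrature term, and an expectation-approximation term, each killed by Definition~\ref{def:input:perturbation} and Assumptions \ref{assumption:discretization:weak:convergence}--\ref{assumption:approx:integral} in the appropriate limit --- is exactly the paper's argument, merely phrased as a chain of intermediate objects rather than as a single triangle inequality. Where you genuinely diverge is the last step, converting convergence of the data to convergence of the estimator. The paper invokes the quantitative perturbation theory for least squares problems (\cite[Thms.\ $1.4.2$ \& $1.4.4$]{Bjorck1996}): once $\norm{A-A_{\delta,h,N}^\varepsilon}_2\norm{A^{+}}_2<1$, the rank is preserved and one has the explicit bound
\begin{equation*}
  \norm{\hat\theta-\hat\theta_{\delta,h,N}^\varepsilon}_2\le\frac{\norm{A^{+}}_2}{1-\norm{A^{+}}_2\norm{A_{\delta,h,N}^\varepsilon-A}_2}\Bigl(\sqrt{2}\,\norm{A^{+}}_2\norm{b}_2\norm{A_{\delta,h,N}^\varepsilon-A}_2+\norm{b_{h,N}^\varepsilon-b}_2\Bigr)\;,
\end{equation*}
whereas you use the qualitative continuity of $M\mapsto M^{+}$ along sequences of eventually constant rank together with lower semicontinuity of the rank and the maximality of $\rank(A)=\min(m,n)$. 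Both hinge on the same structural fact --- full rank is an open condition --- so the rank hypothesis plays an identical role in each; your route is more elementary and self-contained, while the paper's choice pre-installs the quantitative machinery that is reused verbatim in Proposition \ref{prop:conv:rate} to extract convergence rates, which a purely topological continuity argument cannot deliver. Your treatment of the second assertion ($\rank(A)=n\Rightarrow\mathcal{S}=\{\theta\}\Rightarrow\hat\theta=\theta$, then apply \eqref{eq:decomp:error}) coincides with the paper's. The two loose ends you flag --- the regularity bookkeeping needed to apply Assumption \ref{assumption:discretization:weak:convergence} to $\mathcal{L}_j\phi$, and the compatibility of the quadrature nodes with $\mathcal{T}_h$ --- are left implicit in the paper as well, so they are not gaps relative to it.
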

\begin{proof}
  Let $X^\varepsilon$ be a weak perturbation of $X$ satisfying
  Assumptions
  \ref{assumption:discretization:weak:convergence}--\ref{assumption:approx:integral}
  for sufficiently small $\varepsilon>0$.  The difference between $b$ in
  \eqref{eq:param:lsp} and $b_{h,N}^\varepsilon$ in
  \eqref{eq:param:lsp:approx:weak:explicit} can be estimated via
  \begin{equation}
    \begin{split}
      \norm{b - b_{h,N}^\varepsilon}_2 
      &\le \sqrt{m}\max_{1\le i\le m}
      \Bigl( \abs{u(t,\xi_i;\phi) - u^\varepsilon(t,\xi_i;\phi)}\\
      &\qquad\qquad + \abs{u^\varepsilon(t,\xi_i;\phi) -
        \bar{u}_h^\varepsilon(t,\xi_i;\phi)} +
      \abs{\bar{u}_{h}^\varepsilon(t,\xi_i;\phi) -
        \bar{u}_{h,N}^\varepsilon(t,\xi_i;\phi)} \Bigr)\;.
    \end{split}
    \label{eq:perturbation:rhs:estimate}
  \end{equation}
  Since $\phi\in V_n\cap C_P^{2+\beta}(\R^d)$, the third term in
  \eqref{eq:perturbation:rhs:estimate} vanishes a.s.\ in the limit as
  $N\rightarrow\infty$ by Assumption
  \ref{assumption:approx:expectation}. Furthermore, the second term
  vanishes as $h\rightarrow 0$ in view of Assumption
  \ref{assumption:discretization:weak:convergence}, and the first term
  disappears as $\varepsilon\rightarrow 0$ in view of
  \eqref{eq:def:weak:conv}, since $\phi\in V_n$. Consequently, we find
  that
  \begin{equation}
    \lim_{\varepsilon\rightarrow 0} \lim_{h\rightarrow 0} \lim_{N\rightarrow\infty} \norm{b - b_{h,N}^\varepsilon}_2 = 0\;,\quad\textrm{a.s.}\label{eq:perturbation:rhs:limit}
  \end{equation}
  Next, we estimate the difference of matrix $A$ in
  \eqref{eq:param:lsp} and matrix $A_{\delta,h,N}^\varepsilon$ in
  \eqref{eq:param:lsp:approx:weak:explicit} via
  \begin{equation}
    \begin{split}
      \norm{A - A_{\delta,h,N}^\varepsilon}_2
      &\le \sqrt{nm}\max_{\substack{1\le i\le m \\ 1\le j\le n}} \Biggl(
      \abs{\int_0^t u(s,\xi_i;\mathcal{L}_j\phi)\, ds -\int_0^t u^\varepsilon(s,\xi_i;\mathcal{L}_j\phi)\, ds}\\
      &\qquad\qquad + \abs{\int_0^t u^\varepsilon(s,\xi_i;\mathcal{L}_j\phi)\, ds - Q_{n_\delta}^t\bigl( u^\varepsilon(\cdot,\xi_i;\mathcal{L}_j\phi)\bigr)}\\
      &\qquad\qquad
      + \abs{Q_{n_\delta}^t\bigl( u^\varepsilon(\cdot,\xi_i;\mathcal{L}_j\phi) - \bar{u}_{h}^\varepsilon(\cdot,\xi_i;\mathcal{L}_j\phi)\bigr)}\\
      &\qquad\qquad
      + \abs{Q_{n_\delta}^t\bigl( \bar{u}_{h}^\varepsilon(\cdot,\xi_i;\mathcal{L}_j\phi) - \bar{u}_{h,N}^\varepsilon(\cdot,\xi_i;\mathcal{L}_j\phi)\bigr)}\Biggr)\;.
    \end{split}
    \label{eq:perturbation:matrix:estimate}
  \end{equation}
  Recall that
  $\mathcal{L}_j\phi = f_j\cdot\nabla\phi +
  \frac{1}{2}G_j:\nabla\nabla\phi$ and that $Q_{n_\delta}^t$ denotes
  the quadrature operator of the trapezoidal rule on $[0,t]$ with
  $n_\delta$ equally spaced subdivisions, see
  \eqref{eq:quadrature:trapezoidal}. By the same argument as above, we
  find that the fourth term on the right-hand side of
  \eqref{eq:perturbation:matrix:estimate} vanishes a.s.\ as
  $N\rightarrow\infty$ by Assumption
  \ref{assumption:approx:expectation} and the third term in
  \eqref{eq:perturbation:matrix:estimate} does so in the limit as
  $h\rightarrow 0$ by Assumption
  \ref{assumption:discretization:weak:convergence}. The second term
  disappears in the limit as $\delta\rightarrow 0$ by Assumption
  \ref{assumption:approx:integral}, while the first term vanishes as
  $\varepsilon\rightarrow 0$ in view of
  \eqref{eq:def:weak:conv}. Thus, here we find
  \begin{equation}
    \lim_{\varepsilon\rightarrow 0} \lim_{\delta\rightarrow 0} \lim_{h\rightarrow 0} \lim_{N\rightarrow\infty} \norm{A - A_{\delta,h,N}^\varepsilon}_2 = 0\;,\quad\textrm{a.s.}\label{eq:perturbation:matrix:limit}
  \end{equation}
  Therefore we have that $\norm{A -
    A_{\delta,h,N}^\varepsilon}_2\norm{A^{+}}_2 < 1$ a.s.\ for
  sufficiently small $N^{-1}$, $h$, $\delta$, and $\varepsilon$. In
  view of the rank hypothesis $\rank{(A)} = \min(m,n)$ it thus follows
  from \cite[Thm.\ $1.4.2$ \& $1.4.4$]{Bjorck1996} that
  \begin{equation*}
    \norm{\hat\theta - \hat\theta_{\delta,h,N}^\varepsilon}_2 
    \le \frac{\norm{A^{+}}_2}{1-\norm{A^{+}}_2\norm{A_{\delta,h,N}^\varepsilon-A}_2}\Bigl(
    \sqrt{2} \norm{A^{+}}_2\norm{b}_2\norm{A_{\delta,h,N}^\varepsilon-A}_2 + \norm{b_{h,N}^\varepsilon-b}_2\Bigr)\;,
  \end{equation*}
  holds a.s.\ for sufficiently small $N^{-1}$, $h$, $\delta$, and
  $\varepsilon$. This bound, together with
  \eqref{eq:perturbation:rhs:limit} and
  \eqref{eq:perturbation:matrix:limit}, eventually implies the claim
  \eqref{eq:estimator:limit:stable}.
  
  For $\rank{(A)} = n$, it is well-known that the set $\mathcal{S}$ in
  \eqref{eq:param:lsp}, i.e.\ the set of all least squares solutions,
  contains only one element \cite[Thm.\ $1.1.3$]{Bjorck1996}. By
  construction $\theta\in\mathcal{S}$, so that
  $\theta=\hat\theta$. Therefore \eqref{eq:decomp:error} and
  \eqref{eq:estimator:limit:stable} imply the claim
  \eqref{eq:estimator:limit:unbiased}.
\end{proof}
\begin{remark}
  The rank condition $\rank(A) = n$ in the previous result ensures the
  model consistency of the estimation procedure. Specifically, the
  rank condition makes the link to the feasibility of parametrization
  \eqref{eq:parameterization}, in the sense that $\rank(A) = n$ is
  only possible, if the parametrization \eqref{eq:parameterization}
  for $X$ solving \eqref{eq:sde} is reasonable and unique. From a more
  technical viewpoint, the rank condition is crucial for the
  sensitivity of the least squares problem and is thus inherent to any
  methodology relying on a least squares approach. In fact, either
  rank hypothesis (i.e.\ $\rank(A) = \min\{m,n\}$ or $\rank(A) = n$)
  ensures that the least squares approach itself is stable.
\end{remark}

Based on the convergence properties of the estimation procedure
described in Proposition \ref{prop:asymp:unbiased}, it is also
possible to identify the stability and consistency concepts introduced
in Section \ref{sec:setting}. Recall that, in view of the notation
introduced in that Section, we identify $\lambda=(\delta,h,N)$ here
and understand $\lim_{\lambda\rightarrow 0}$ as
$\lim_{\delta\rightarrow 0} \lim_{h\rightarrow 0}
\lim_{N\rightarrow\infty}$. Moreover, the class $F$ of feasible
processes is characterised by processes that satisfy Assumptions
\ref{assumption:discretization:weak:convergence}--\ref{assumption:approx:integral}.
\begin{corollary}
  \label{coroll:asymp:unbiased}
  Let $X$ be the solution to \eqref{eq:sde} corresponding to the true
  parameter $\theta\in\Theta$ in \eqref{eq:parameterization}.
  Moreover, let $\Xi$ and $\phi\in V_n\cap C_P^{2+\beta}(\R^d)$, for
  some $\beta>0$, be such that $\rank{(A)} = \min(m,n)$. Then, for any
  $t\in[0,T]$, it holds that
  \begin{enumerate}
  \item[$(i)$] the estimation procedure is numerically consistent, and
  \item[$(ii)$] the estimation procedure is $\varepsilon$-stable
  \end{enumerate}
  for any weak perturbation $X^\varepsilon$ of $X$, provided
  $X^\varepsilon$ is such that Assumptions
  \ref{assumption:discretization:weak:convergence},
  \ref{assumption:approx:expectation}, and
  \ref{assumption:approx:integral} hold for sufficiently small
  $\varepsilon>0$.
\end{corollary}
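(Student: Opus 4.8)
The plan is to read off both statements from Proposition \ref{prop:asymp:unbiased} by carefully unwinding the definitions of numerical consistency ($\varepsilon$-independent) and $\varepsilon$-stability ($\varepsilon$-dependent), using the error decomposition already established in its proof. First I would set $\lambda = (\delta,h,N)$ and recall that $\lim_{\lambda\rightarrow 0}$ abbreviates $\lim_{\delta\rightarrow 0}\lim_{h\rightarrow 0}\lim_{N\rightarrow\infty}$, and that $F$ is the class of almost surely continuous processes on $[0,T]$ satisfying Assumptions \ref{assumption:discretization:weak:convergence}--\ref{assumption:approx:integral}; note $X\in F$ since for $\varepsilon=0$ (no perturbation) the three assumptions reduce to standard weak-convergence of the time discretization, almost sure convergence of the empirical/regression average, and convergence of the trapezoidal rule for the time-continuous moments, all of which hold under the hypotheses on $\phi$ and $\Xi$.

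For part $(i)$, numerical consistency for class $F$, I would argue as follows. Apply the first half of Proposition \ref{prop:asymp:unbiased} (the case $\rank(A) = \min(m,n)$), but with the weak perturbation $X^\varepsilon$ replaced by an arbitrary fixed $Y\in F$ playing the role of "the process from which data are drawn." The proof of Proposition \ref{prop:asymp:unbiased} shows, via \eqref{eq:perturbation:rhs:estimate}, \eqref{eq:perturbation:matrix:estimate} and the perturbation bound from \cite[Thm.\ $1.4.2$ \& $1.4.4$]{Bjorck1996}, that $\lim_{\delta\rightarrow 0}\lim_{h\rightarrow 0}\lim_{N\rightarrow\infty}\hat\theta_{\delta,h,N}(Y)$ exists almost surely and equals $A(Y)^{+}b(Y)$, where $A(Y)$, $b(Y)$ are the continuous-time matrix and right-hand side built from $Y$ (the $\varepsilon\rightarrow 0$ limit is not needed for this step, since $Y$ is fixed). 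Almost sure convergence implies convergence in probability, so $\Lambda(Y):=\lim_{\lambda\rightarrow 0}\Lambda_\lambda(Y)$ exists in probability for every $Y\in F$; this is precisely Definition \ref{def:num:cons}.

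For part $(ii)$, $\varepsilon$-stability, I would take a weak perturbation $X^\varepsilon$ of $X$ with $X^\varepsilon\in F$ and compute $\Lambda(X^\varepsilon) = A(X^\varepsilon)^{+}b(X^\varepsilon)$ and $\Lambda(X) = A(X)^{+}b(X) = \hat\theta$ from part $(i)$. The first term on the right-hand side of \eqref{eq:perturbation:rhs:estimate} and the first term of \eqref{eq:perturbation:matrix:estimate} show, using \eqref{eq:def:weak:conv} and $\phi\in V_n$, that $\norm{b(X^\varepsilon) - b(X)}_2 \rightarrow 0$ and $\norm{A(X^\varepsilon) - A(X)}_2 \rightarrow 0$ as $\varepsilon\rightarrow 0$; another application of the perturbation estimate \cite[Thm.\ $1.4.2$ \& $1.4.4$]{Bjorck1996}, legitimate once $\norm{A(X^\varepsilon) - A(X)}_2\norm{A(X)^{+}}_2 < 1$, gives $\norm{\Lambda(X^\varepsilon) - \Lambda(X)}_2 \rightarrow 0$, hence $\lim_{\varepsilon\rightarrow 0}\Lambda(X^\varepsilon) = \Lambda(X)$ in probability, which is Definition \ref{def:eps:stab}. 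The only mildly delicate point is bookkeeping: one must ensure that the $\varepsilon\rightarrow 0$ limit in the definitions of stability and consistency is applied \emph{after} the $\lambda\rightarrow 0$ limit has produced the well-defined objects $\Lambda(X^\varepsilon)$ and $\Lambda(X)$, so that the rank hypothesis on $A = A(X)$ still controls the sensitivity of the pseudoinverse at the relevant stage; but this is exactly the order already respected by \eqref{eq:estimator:limit:stable}, so no new argument is required. I do not expect a genuine obstacle here — the corollary is essentially a restatement of Proposition \ref{prop:asymp:unbiased} in the vocabulary of Section \ref{sec:setting}, and the main task is to verify that each iterated limit matches the corresponding definition.
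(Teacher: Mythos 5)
Your proposal is correct, and part $(i)$ matches the paper, which likewise disposes of numerical consistency by observing that the estimates \eqref{eq:perturbation:rhs:estimate} and \eqref{eq:perturbation:matrix:estimate} from the proof of Proposition \ref{prop:asymp:unbiased} apply with a fixed feasible process in place of the iterated limit. For part $(ii)$, however, you take a genuinely different route. You re-run the matrix perturbation analysis at the continuous level: you identify $\Lambda(X^\varepsilon) = {A(X^\varepsilon)}^{+}b(X^\varepsilon)$, show $\norm{A(X^\varepsilon)-A(X)}_2\rightarrow 0$ and $\norm{b(X^\varepsilon)-b(X)}_2\rightarrow 0$ from the first terms of the two estimates together with \eqref{eq:def:weak:conv}, and invoke the pseudoinverse perturbation bound a second time for the pair $\bigl(A(X),A(X^\varepsilon)\bigr)$. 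The paper instead deduces $(ii)$ purely from results already in hand, namely from $(i)$ and the convergence statement \eqref{eq:estimator:limit:stable}, via the reverse triangle inequality $\lVert\hat\theta_{\delta,h,N}^\varepsilon -\hat\theta\rVert \ge \bigl\vert \lVert \hat\theta_{\delta,h,N}^\varepsilon - \hat\theta^\varepsilon\rVert - \lVert \hat\theta^\varepsilon-\hat\theta \rVert \bigr\vert$: letting $\lambda\rightarrow 0$ kills the first term inside by $(i)$ and the left-hand side tends to zero as $\varepsilon\rightarrow 0$ by \eqref{eq:estimator:limit:stable}, forcing $\lVert\hat\theta^\varepsilon-\hat\theta\rVert\rightarrow 0$. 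Your approach is more self-contained and makes explicit where the weak-perturbation hypothesis enters, at the cost of having to verify the Bj\"orck hypotheses afresh (in particular that $A(X^\varepsilon)$ inherits full rank from $A(X)$ for small $\varepsilon$, which does hold by lower semicontinuity of the rank); the paper's argument avoids any new perturbation analysis and directly instantiates the abstract ``convergence plus consistency implies stability'' relationship announced in section \ref{sec:setting}. Both arguments share the same mild looseness in part $(i)$: continuity of the pseudoinverse at $A(Y)$ is only guaranteed for $Y=X^\varepsilon$ with $\varepsilon$ small, not for an arbitrary member of $F$, but the corollary's quantification over weak perturbations with small $\varepsilon$ makes this harmless.
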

\begin{proof}
  Claim $(i)$ follows from the same arguments as the ones used in the
  proof of Proposition \ref{prop:asymp:unbiased}. Under the hypotheses
  of the Corollary, result \eqref{eq:estimator:limit:stable} holds and
  claim $(ii)$ follows from $(i)$ in view of the bound
  ${\lVert\hat\theta_{\delta,h,N}^\varepsilon -\hat\theta\rVert}_2 \ge
  \bigl\vert {\lVert \hat\theta_{\delta,h,N}^\varepsilon -
  \hat\theta^\varepsilon\rVert}_2 - {\lVert
  \hat\theta^\varepsilon-\hat\theta \rVert}_2 \bigr\vert$.
\end{proof}

\begin{remark}
  From Corollary \ref{coroll:asymp:unbiased}, and in view of Remark
  \ref{rem:connection:stat:consitency}, it follows that the estimation
  procedure introduced in Section \ref{sec:estimator} is also
  consistent in the sense used in the mathematical statistics
  literature, provided that the rank condition $\rank(A) = n$
  holds. We iterate that this condition is common to all statistical
  methods relying on a least squares approach.
\end{remark}

\subsection{Convergence rates}
\label{conv:errana:rates}
From a practical point of view it is also of interest to quantify the
rate of convergence. To this end, we strengthen Assumptions
\ref{assumption:discretization:weak:convergence}--\ref{assumption:approx:integral}
by quantifying these convergence rates for the approximations
accordingly. We begin by characterising the quality of the discrete
time observations.
\begin{assumption}
  \label{assumption:discretization:weak:convergence:order}
  Let $\mathcal{T}_h = \{0,h,2h,\dots ,n_th\}$, for $h>0$, and
  $n_t\in\N$ such that $t = n_t h$. The time discrete approximation
  $\bar{X}_{h\vert\xi}^\varepsilon$ corresponding to a time step $h$
  \emph{converges weakly} with order $\beta>0$ as $h\rightarrow 0$ to
  $X_\xi^\varepsilon$ at time $\tau\in\mathcal{T}_h$, in the sense
  that
  \begin{equation}
     \abs{\E\Bigl(\varphi\bigl(X_\xi^\varepsilon(\tau)\bigr)\Bigr) - 
      \E\Bigl(\varphi\bigl(\bar{X}_{h\vert\xi}^\varepsilon(\tau)\bigr)\Bigr) } \le Ch^\beta\;,
    \label{eq:discr:weak:conv:order}
  \end{equation}
  for any $\varphi\in C_P^{2(\beta+1)}(\R^d)$ and any
  $\xi\in\Xi$. Therein $C$ is independent of $h$, for $h$ sufficiently
  small.
\end{assumption}
\begin{remark}
  Note that the analysis in this Section can be readily extended to
  non-equidistant time discretization, and the choice of an
  equidistant one is merely made for convenience. What is important,
  however, is that the time discretization is nonrandom so that a
  uniform weak convergence on the discrete interval $\mathcal{T}_h$
  follows from \eqref{eq:discr:weak:conv:order} (see \cite[p.\
  $475$]{Kloeden1992}):
  \begin{equation*}
    \max_{\tau\in\mathcal{T}_h}\abs{\E\Bigl(\varphi\bigl(X_\xi^\varepsilon(\tau)\bigr)\Bigr) - 
      \E\Bigl(\varphi\bigl(\bar{X}_{h\vert\xi}^\varepsilon(\tau)\bigr)\Bigr) } \le Ch^\beta\;.
  \end{equation*}
  Furthermore, it follows that an appropriately constructed
  continuous-time extension based on the discrete time approximations
  $\bar{X}_{h\vert\xi}^\varepsilon$ converges weakly with order
  $\beta$ on the whole interval $[0,t]$, $t\in[0,T]$.
\end{remark}
\begin{remark}
  \label{rem:eps:err:const}
  It is noteworthy that the error constant $C$ in
  \eqref{eq:discr:weak:conv:order} may depend on $\varepsilon$. This
  is possible, for example, when the discrete time observations of
  $\bar{X}^\varepsilon$ are being generated via a computer experiment
  based on discretizing an SDE with multiple time scales. However, in
  that case there exist specialised methods to remove this dependency,
  such as the heterogeneous multiscale method
  \cite{Vanden-Eijnden2003,E2005}. Here we do not pursue this further
  as it would introduce additional technicalities and deviate the
  attention from the principle question of convergent estimators; see
  also Remark \ref{rem:eps:err:const:sampling} below. Another relevant
  aspect when generating observations via discretizing an SDE is the
  numerical stability of the discretization method; see, e.g.,
  \cite{Kloeden1992,Milstein2004,Buckwar2011}.  However, as a method's
  numerical stability is problem dependent and since we work under the
  assumption that the observations (i.e.\ the data) are given, we will
  not address this topic here further. Instead, we consider
  ``sufficiently small'' time step sizes in Assumption
  \ref{assumption:discretization:weak:convergence:order}, so that no
  stability issues are present.  Finally, we remark that these
  considerations do not apply for real world observations.
\end{remark}

Next we make an assumption on the mean squared convergence of the
approximations of expectations, which is a well-established error
criterion for moment approximations.
\begin{assumption}
  \label{assumption:approx:expectation:order}
  For any $\varphi\in C_b(\R^d)$, $\tau\in\mathcal{T}_h$, and $\xi\in\Xi$, let
  $\bar{u}_{h,N}^\varepsilon(\tau,\xi;\varphi)$ be an approximation of
  $\bar{u}_h^\varepsilon(\tau,\xi;\varphi) :=
  \E\Bigl(\varphi\bigl(\bar{X}_{h\vert\xi}^\varepsilon(\tau)\bigr)\Bigr)$
  such that
  \begin{equation*}
    \E{\Bigl(\bigl(\bar{u}_{h,N}^\varepsilon(\tau,\xi;\varphi) - \bar{u}_h^\varepsilon(\tau,\xi;\varphi)\bigr)^2\Bigr)} \le C N^{-\gamma}\;,
  \end{equation*}
  for some $\gamma>0$. For $\varepsilon ,h$ sufficiently small, both
  $\gamma$ and the constant $C$ are independent of $\varepsilon, h,
  \tau$, and $N$.
\end{assumption}

Finally we impose some temporal regularity on the expectations.
\begin{assumption}
  \label{assumption:approx:integral:order}
  For any $\varphi\in C_b(\R^d)$ and any $\xi\in\Xi$, the function
  $t\mapsto \E\bigl(\varphi\bigl(X_\xi(t)\bigr)\bigr) \equiv
  u(t,\xi;\varphi)$ is H{\"o}lder continuous on $[0,t]$, $t\in[0,T]$,
  with exponent $\alpha >0$.
\end{assumption}

Based on theses strengthened assumptions it is possible to obtain the
following result concerning convergence rates. For convenience we only
present the case where the matrix $A$ satisfies the rank condition
$\rank{(A)} = n$. The case $\rank{(A)} = \min{(m,n)}$ can be treated
similarly.
\begin{proposition}
  \label{prop:conv:rate}
  Let $X$ be the solution to \eqref{eq:sde} corresponding to the true
  parameter $\theta\in\Theta$ in \eqref{eq:parameterization}.
  Moreover, let $\Xi$ and $\phi\in V_n\cap C_P^{2(\beta+1)}(\R^d)$,
  with $\beta$ as in Assumption
  \ref{assumption:discretization:weak:convergence:order}, be such that
  $\rank{(A)} = n$. Furthermore, let $X^\varepsilon$ be a weak
  perturbation of $X$ such that
  \begin{equation*}
    \sup_{t\in[0,T]}\abs{\E\Bigl(\varphi\bigl(X_\xi^\varepsilon(t)\bigr)\Bigr)-\E\Bigl(\varphi\bigl(X_\xi(t)\bigr)\Bigr)}\le C\varepsilon\;,
  \end{equation*}
  for any $\varphi\in C_b(\R^d)$ with $C$ independent of
  $\varepsilon$, and such that Assumptions
  \ref{assumption:discretization:weak:convergence:order},
  \ref{assumption:approx:expectation:order}, and
  \ref{assumption:approx:integral:order} hold. Then for any
  $t\in[0,T]$, it holds with probability exceeding $p\in[0,1[$ that
  \begin{equation}
    \frac{\norm{\hat\theta_{\delta,h,N}^\varepsilon-\theta}_2}{\norm{\theta}_2} \le C\biggl( \varepsilon + \delta^\alpha  + \min{\bigl(1, c(\varepsilon) h^\beta\bigr)} + \frac{N^{-\gamma/2}}{\sqrt{1-p}} \biggr)\;,\label{eq:conv:rate}
  \end{equation}
  for $\varepsilon ,\delta , h$, and $N^{-1}$ sufficiently
  small. Therein the constant $C$ is independent of $\varepsilon$,
  $\delta$, $h$, $p$, and $N$, while the constant $c(\varepsilon)$
  only depends on $\varepsilon>0$.
\end{proposition}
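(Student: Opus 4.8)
The plan is to refine the error decomposition from Proposition \ref{prop:asymp:unbiased} by tracking explicit rates instead of merely showing each term vanishes. Starting from the perturbation bound in \cite[Thm.\ $1.4.2$]{Bjorck1996}, valid since $\rank(A)=n$ implies $\theta=\hat\theta$, I would write
\begin{equation*}
  \norm{\hat\theta_{\delta,h,N}^\varepsilon-\theta}_2
  \le \frac{\norm{A^{+}}_2}{1-\norm{A^{+}}_2\norm{A_{\delta,h,N}^\varepsilon-A}_2}
  \Bigl(\sqrt{2}\,\norm{A^{+}}_2\norm{b}_2\norm{A_{\delta,h,N}^\varepsilon-A}_2 + \norm{b_{h,N}^\varepsilon-b}_2\Bigr),
\end{equation*}
so once $\norm{A_{\delta,h,N}^\varepsilon-A}_2$ is small enough the prefactor is bounded by a constant, and it remains to bound $\norm{A-A_{\delta,h,N}^\varepsilon}_2$ and $\norm{b-b_{h,N}^\varepsilon}_2$ term by term, dividing through by $\norm\theta_2$ at the end (noting $\norm{A^{+}}_2\norm b_2$ is comparable to $\norm\theta_2$ up to a condition-number constant, or simply absorbing the homogeneous scaling).

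Next I would estimate the four pieces of \eqref{eq:perturbation:matrix:estimate} (and the three of \eqref{eq:perturbation:rhs:estimate}) against the strengthened assumptions. The weak-perturbation term gives $\abs{\int_0^t u - \int_0^t u^\varepsilon} \le T\sup_t\abs{u-u^\varepsilon}\le C\varepsilon$ using the new linear-in-$\varepsilon$ hypothesis (with $\mathcal L_j\phi\in C_b$ since $\phi\in V_n$). The quadrature term is $\abs{\int_0^t u^\varepsilon - Q^t_{n_\delta}(u^\varepsilon)}\le C\delta^\alpha$ by Assumption \ref{assumption:approx:integral:order} together with the standard Hölder-continuity error bound for the trapezoidal rule \cite{Cruz-Uribe2002}; here one should check the Hölder exponent transfers from $u$ to $u^\varepsilon$ uniformly in $\varepsilon$, or restate the assumption for $u^\varepsilon$ as was implicitly done before. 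The discretization term $\abs{Q^t_{n_\delta}(u^\varepsilon - \bar u_h^\varepsilon)}$ is bounded by $\sum$-of-weights times the uniform weak-convergence estimate $Ch^\beta$ from Assumption \ref{assumption:discretization:weak:convergence:order}; since the trapezoidal weights sum to $t\le T$, this gives $\le Cc(\varepsilon)h^\beta$, and it is also trivially $\le C$ because $\phi,\mathcal L_j\phi$ are bounded, yielding the $\min(1,c(\varepsilon)h^\beta)$ factor.

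The genuinely probabilistic ingredient is the Monte Carlo term $\abs{Q^t_{n_\delta}(\bar u_h^\varepsilon - \bar u_{h,N}^\varepsilon)}$. Here I would use Assumption \ref{assumption:approx:expectation:order}: for each quadrature node $\tau$, $\E\bigl((\bar u_{h,N}^\varepsilon(\tau,\xi;\mathcal L_j\phi)-\bar u_h^\varepsilon(\tau,\xi;\mathcal L_j\phi))^2\bigr)\le CN^{-\gamma}$, so by the triangle inequality on the $L^2(\Omega)$ norm the quadrature average has second moment $\le CN^{-\gamma}$ (the weights summing to $t$, uniformly over the finitely many $i,j$). Then Chebyshev's (or Markov's) inequality converts this into a statement that holds with probability exceeding $p$: the event that all the relevant $\abs{Q^t_{n_\delta}(\cdots)}$ are $\le N^{-\gamma/2}/\sqrt{1-p}$ (up to the constant) has probability at least $p$, after a union bound over the $nm$ entries absorbed into $C$. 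On that event the matrix perturbation is small for $N^{-1},h,\delta,\varepsilon$ small, the prefactor is controlled, and summing the four contributions gives \eqref{eq:conv:rate}. The main obstacle is the bookkeeping of uniformity: all constants must be shown independent of $\varepsilon,\delta,h,N$ (and of the number of quadrature nodes, which grows as $\delta\to0$), which is why one wants the $L^2$-bound in Assumption \ref{assumption:approx:expectation:order} to be uniform in $\tau$ and why the weight-sum-equals-$t$ property is essential rather than a crude node-count bound.
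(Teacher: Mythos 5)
Your proposal follows essentially the same route as the paper's proof: Chebyshev's inequality applied to Assumption \ref{assumption:approx:expectation:order} for the Monte Carlo term, term-by-term rate bounds on the decompositions \eqref{eq:perturbation:rhs:estimate} and \eqref{eq:perturbation:matrix:estimate} using the strengthened assumptions and the trapezoidal error bound of \cite{Cruz-Uribe2002}, and the least-squares perturbation theorem from \cite{Bjorck1996} to conclude. Your version is, if anything, slightly more careful than the paper's on two points it passes over silently --- that Assumption \ref{assumption:approx:integral:order} is stated for $u$ rather than $u^\varepsilon$, and that taking the $L^2(\Omega)$ triangle inequality across the quadrature average avoids a union bound over the growing number of quadrature nodes.
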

\begin{proof}
  We fix $t\in[0,T]$ and $0\le p<1$.  In view of Chebyshev's
  inequality, Assumption \ref{assumption:approx:expectation:order}
  implies that $\vert\bar{u}_h^\varepsilon(\tau,\xi;\varphi) -
    \bar{u}_{h,N}^\varepsilon(\tau,\xi;\varphi)\vert \le C N^{-\gamma/2}
  /\sqrt{1-p}$ with probability exceeding $p$, for any
  $\tau\in\mathcal{T}_h$, $\xi\in\Xi$, $\varphi\in V_n$. As $\phi\in
  V_n\cap C_P^{2(\beta+1)}(\R^d)$, it follows from the assumptions and
  from \eqref{eq:perturbation:rhs:estimate} that
  \begin{equation*}
    \norm{b - b_{h,N}^\varepsilon}_2
    \le C_b\biggl( \varepsilon  + \min{\bigl(1, c(\varepsilon) h^\beta\bigr)} + \frac{N^{-\gamma/2}}{\sqrt{1-p}} \biggr)
  \end{equation*}
  with probability exceeding $p$, where $C_b$ is independent of
  $\varepsilon$, $h$, $p$, and $N$. The constant $c(\varepsilon)$ only
  depends on $\varepsilon>0$; cf.\ Remark
  \ref{rem:eps:err:const:sampling} below. Similarly, it follows from
  \eqref{eq:perturbation:matrix:estimate} with some algebra that there
  exists a constant $C_A$, independent of $\varepsilon$, $h$,
  $\delta$, $p$, and $N$, such that
\begin{equation*}
    \norm{A - A_{h,\delta,N}^\varepsilon}_2
    \le C_A\biggl( \varepsilon + \delta^\alpha + \min{\bigl(1, c(\varepsilon) h^\beta\bigr)} + \frac{N^{-\gamma/2}}{\sqrt{1-p}} \biggr)
  \end{equation*}
  with probability exceeding $p$ in view of the hypotheses and
  \cite[Thm.\ $1.1$]{Cruz-Uribe2002}. For $\varepsilon$, $\delta$,
  $h$, and $N^{-1}$ sufficiently small, the claim then follows in view
  of \cite[Thm.\ $1.4.6$]{Bjorck1996}.
\end{proof}
\begin{remark}
  \label{rem:eps:err:const:sampling}
  In Proposition \ref{prop:conv:rate} above we use $c(\varepsilon)$ to
  indicate that the error constant in \eqref{eq:conv:rate} could
  depend on $\varepsilon$, due the dependency of the discrete time
  observations in \eqref{eq:discr:weak:conv:order} on the parameter
  $\varepsilon$ (see also Remark \ref{rem:eps:err:const}). It is worth
  mentioning however, that this error contribution due to inexact
  sampling is often neglected in the (statistical) analysis of
  estimation procedures for diffusion processes (see, e.g.\
  \cite{PrakasaRao1999}) and it is instead assumed that the process is
  sampled exactly. When overlooking this particular error contribution
  here too, the convergence rate \eqref{eq:conv:rate} simplifies, as
  $c(\varepsilon)\equiv 0$ in this case.
\end{remark}

Observe that the ensemble estimator \eqref{eq:approx:exp:ensemble} to
approximate expectations is covered by the hypotheses of Proposition
\ref{prop:conv:rate}. In fact, Assumption
\ref{assumption:approx:expectation:order} holds with $\gamma = 1$ in
this case. The situation is more intricate for estimators based on one
long trajectory (i.e.\ time series). This is due to the fact that the
techniques for proving the mean squared convergence of
\eqref{eq:approx:exp:mixing} rely on Taylor expansions of the
stationary density function of the underlying random
variables. Consequently, the error constant in Assumption
\ref{assumption:approx:expectation:order} depends on (partial)
derivatives of this density in this case; see \cite[Thm.\
$3.1$]{Bosq1998}. Therefore, it is not possible to obtain uniform
bounds with respect to the parameters $\varepsilon$ and $h$ as
required by Assumption \ref{assumption:approx:expectation:order}.
From a practical point of view we believe, however, that bound
\eqref{eq:conv:rate} for estimator \eqref{eq:approx:exp:mixing} is
nonetheless useful, here in the form
\begin{equation*}
  \frac{\norm{\hat\theta_{\delta,h,N}^\varepsilon-\theta}_2}{\norm{\theta}_2} \le C\biggl( \varepsilon + \delta^\alpha  + \min{\bigl(1, c_1(\varepsilon) h^\beta\bigr)} + c_2(\varepsilon,h)\frac{N^{-\gamma/2}}{\sqrt{1-p}} \biggr)\;,
\end{equation*}
with $\gamma = 4/(d+4)$, because it highlights the interplay of the
parameters that influence the accuracy and can thus guide numerical
experiments.

Finally, we remark that in practice the combination of discrete time
observations and numerical integration naturally links $h$ and
$\delta$. That is, the numerical integration time step $\delta$ (and
hence $n_\delta$) is not arbitrary but has to be such that
$\delta = lh$ (or $n_t=ln_\delta$), for some $l\in\N$. The choice
$l>1$ could then make sense to reduce the computational effort during
the integral approximation, while bound \eqref{eq:conv:rate} also
suggests to choose $\delta\propto h^{\beta/\alpha}$ so that both error
contributions are of the same order.

%
%
\section{Application: Data-Driven Coarse-Graining for Multiscale Diffusion
  Processes}
\label{sec:numerics}

As motivated in the introduction, one important class of problems for
which it is essential to have a convergent estimation procedure, is
the problem of finding effective coarse-grained systems associated
with the resolved degree of freedom of a multiscale diffusion
process. Specifically, we consider the following prototypical system
of SDEs,
\begin{subequations}
  \begin{align}
    dX^\varepsilon &= \Bigl(\frac{1}{\varepsilon}f_0(X^\varepsilon,Y^\varepsilon) + f_1(X^\varepsilon,Y^\varepsilon) \Bigr)\,dt + \alpha_0(X^\varepsilon,Y^\varepsilon)\,dU_t + \alpha_1(X^\varepsilon,Y^\varepsilon)\,dV_t\;,\label{eq:sde:generic:fastslow:slow}\\
    dY^\varepsilon &= \Bigl(\frac{1}{\varepsilon^2}g_0(X^\varepsilon,Y^\varepsilon) + \frac{1}{\varepsilon}g_1(X^\varepsilon,Y^\varepsilon) + g_2(X^\varepsilon,Y^\varepsilon) \Bigr)\,dt + \frac{1}{\varepsilon}\beta(X^\varepsilon,Y^\varepsilon)\,dV_t\;,\label{eq:sde:generic:fastslow:fast}
\end{align}
\label{eq:sde:generic:fastslow}%
\end{subequations}%
with $X^\varepsilon\colon[0,T]\rightarrow\R^d$ and
$Y^\varepsilon\colon[0,T]\rightarrow\R^{d'}$ for a finite time
interval $[0,T]$. Furthermore
$f_i\colon\R^d\times\R^{d'}\rightarrow\R^d$, $i\in\{0,1\}$,
$\alpha_0\colon\R^d\times\R^{d'}\rightarrow\R^{d\times p}$, and
$\alpha_1\colon\R^d\times\R^{d'}\rightarrow\R^{d\times q}$ as well as
$g_i\colon\R^d\times\R^{d'}\rightarrow\R^{d'}$, $i\in\{0,1,2\}$, and
$\beta\colon\R^d\times\R^{d'}\rightarrow\R^{d'\times q}$.  In
\eqref{eq:sde:generic:fastslow}, $U$ and $V$ denote independent
Brownian motions of dimensions $p$ and $q$, respectively, and
$\varepsilon>0$ is a small parameter. The main goal of data-driven
coarse-graining then is to use only observations of the resolved
degrees of freedom, i.e.\ of $X^\varepsilon$ solving
\eqref{eq:sde:generic:fastslow:slow}, to determine a coarse-grained
process $X$ solving
\begin{equation}
  dX = f(X)\,dt + g(X)\,dW_t\;,\label{eq:sde:generic:effective}
\end{equation}
which approximately retains the essential statistical properties of
$X^\varepsilon$ for $\varepsilon\ll 1$.  This strategy can be made
rigorous using homogenization theory; see \cite[Ch.\ $11$ and
$18$]{Pavliotis2008book} and the references therein for details. In
fact, it is well-known that the process $X^\varepsilon$ solving
\eqref{eq:sde:generic:fastslow:slow} converges weakly in
$C([0,T],\R^d)$ to $X$ solving \eqref{eq:sde:generic:effective},
provided that the fast process $Y^\varepsilon$ is ergodic and the
centering condition is satisfied. That is, $X^\varepsilon$ is a weak
perturbation of $X$ in the sense of Definition
\ref{def:input:perturbation} so that data-driven coarse-graining
corresponds precisely to the problem of estimating parameters in the
SDE \eqref{eq:sde:generic:effective} based on a perturbed input.

Here, we present several data-driven coarse-graining examples to
illustrate the applicability of the estimation methodology described
in Section \ref{sec:estimator}. Although the following examples are
fairly simple, they are yet very instructive as they cover many
different and important aspects, including space dependent
coefficients and multivariate processes. Most importantly, however,
all examples are such that the theoretical results presented in
Section \ref{sec:conv} apply and also so that commonly used
statistical techniques, such as the maximum likelihood estimator, fail
to obtain accurate approximations of the parameters in the
coarse-grained model. We also emphasise that we only use
homogenization theory to construct the weakly convergent process
$X^\varepsilon$ and its limit process $X$ in these numerical examples,
so that we can measure the error of the estimated values and compare
it with the theoretical results in Section \ref{sec:conv}. In fact,
the developed estimation procedure itself does not rely on any
homogenization techniques at all. Moreover, it does neither rely on
the statistical knowledge of $Y^\varepsilon$, i.e.\ knowledge of
\eqref{eq:sde:generic:fastslow:fast}, nor on any other information of
\eqref{eq:sde:generic:fastslow}, even $\varepsilon$ is not assumed to
be known.

If not stated otherwise, the discretely sampled observations were
obtained by solving the multiscale SDE numerically via the
Euler--Maruyama scheme (i.e.\ $\beta = 1$ in Assumption
\ref{assumption:discretization:weak:convergence:order} \cite[Ch.\
$9.1$]{Kloeden1992}) using a time step $h=10^{-3}$, which is
sufficiently small to avoid numerical instabilities in the
observations for the values of $\varepsilon$ considered
below. Therefore, the error due to approximating the solution to the
SDE will be negligible so that we can solely focus on the effect due
to the weak perturbations. Moreover, the temporal subdivision used for
the trapezoidal operator \eqref{eq:quadrature:trapezoidal} to
approximate time integrals is set to equate with the sampling time,
i.e.\ $\delta = h$. The set of trial points $\Xi$ used in the examples
below is a collection of normally distributed random variables, which
were drawn a priori and then fixed throughout the numerical
experiment; see \cite{Kalliadasis2015} for alternative, fully
data-driven, strategies.  Based on these approximations and using only
time discrete observations of $X^\varepsilon$, the goal is to infer
the coefficients in the corresponding coarse-grained model
\eqref{eq:sde:generic:effective}, when assuming that both the drift
function $f$ and the diffusion function $G=gg^T$ can be parametrized
as in \eqref{eq:parameterization}. Recall that the estimated value
depends on the choice of the admissible function $\phi$, the set of
trial points $\Xi$, and the time $t$. Consequently, the error
constants in \eqref{eq:conv:rate} also depend on those parameters, in
particular the dependency of the estimated value on $t$ is
profound. We will thus plot the relative errors of the estimated
values as functions of $t$ below.

\subsection{Fast Ornstein--Uhlenbeck noise}
As a first example, consider the two-dimensional multiscale system
\begin{subequations}
\begin{align}
  dX^\varepsilon &= \Bigl(\frac{1}{\varepsilon}\sigma(X^\varepsilon)Y^\varepsilon + h(X^\varepsilon,Y^\varepsilon) - \sigma'(X^\varepsilon)\sigma(X^\varepsilon)\Bigr)\,dt\;,
  \label{eq:sub:OU:fastslow:slow}\\
  dY^\varepsilon &= -\frac{1}{\varepsilon^2}Y^\varepsilon\,dt +
  \frac{\sqrt{2}}{\varepsilon}\,dV_t\;,\label{eq:sub:OU:fastslow:fast}
\end{align}
\label{eq:sub:OU:fastslow}%
\end{subequations}
for some functions $h\colon\R\times\R\to\R$ and
$\sigma\colon\R\to\R_\ge$, and with $V$ being a standard
one-dimensional Brownian motion. Since the fast process is an
Ornstein--Uhlenbeck process, determining the precise form of a
coarse-grained equation associated to this multiscale system reduces
to computing Gaussian integrals. In fact, the associated
coarse-grained model is given by
\begin{equation}
  dX = \bar{h}(X)\,dt + \sqrt{2\sigma(X)^2}\,dW_t\;,\label{eq:sub:OU:effective}
\end{equation}
where $\bar{h}(x)$ denotes the average of $h(x,\cdot)$ with respect to
the invariant measure of the fast process $Y^\varepsilon$, and $W$
denotes another standard one-dimensional Brownian motion. In
\eqref{eq:sub:OU:fastslow:slow} we have subtracted the Stratonovich
correction from the drift so that the noise in
\eqref{eq:sub:OU:effective} can be interpreted in It\^o's sense. This
drift correction was merely done for convenience and is not essential
for what follows. In the sequel we consider two different choices of
the pair $h(\cdot ),\sigma(\cdot )$. 

As a first example let
\begin{equation}
  h(x,y) = Ax\quad\text{and}\quad\sigma(x) = \sqrt{\varsigma}\;,\label{eq:sub:OU:effective:OU}
\end{equation}
for some $\varsigma \ge 0$, so that \eqref{eq:sub:OU:effective} is the
SDE satisfied by an Ornstein--Uhlenbeck process. Consequently, to fit
\eqref{eq:sub:OU:effective} to available data, we seek $n=2$
parameters. Natural choices for the functions in the drift and
diffusion parametrization \eqref{eq:parameterization} are
\begin{equation*}
f_1(x) = x\;,\quad f_2(x) = 0\;,\quad G_1(x) = 0\;,\quad G_2(x) = 2\;,
\end{equation*}
with the true parameters being
$\theta \equiv (\theta_1,\theta_2)^T = (A,\varsigma)^T$. We chose
$\phi(x)= \exp(-x^2/2)$ as admissible function, and approximate the
expectations by an ensemble average of trajectories. Finally, we
consider $m=24$ different trial points. For the numerical experiment
we generate observations of $X^\varepsilon$ on $[0,t]$, i.e.\ of
\eqref{eq:sub:OU:fastslow:slow}, with $(A,\varsigma) = (-0.5, 0.5)$
and $\varepsilon = 0.1$ in \eqref{eq:sub:OU:fastslow}, and fit the
coarse-grained SDE model \eqref{eq:sub:OU:effective} to these
data. Figure \ref{figure:fastou:ou} depicts the relative errors of the
resulting parameter estimates as a function of time $t$, for two
different ensemble sizes $N\in\{100,5000\}$.
\begin{figure}[t]
  \centering
  \subfigure[ensemble size $N = 5000$]{
    \includegraphics[width=0.465\textwidth]{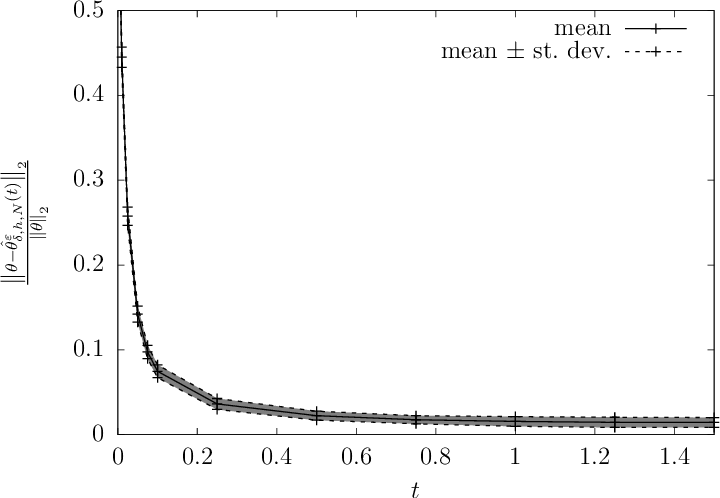}
    \label{fig:fastou:ou:errnorm}
  }
  \quad
  \subfigure[ensemble size $N = 100$]{
    \includegraphics[width=0.465\textwidth]{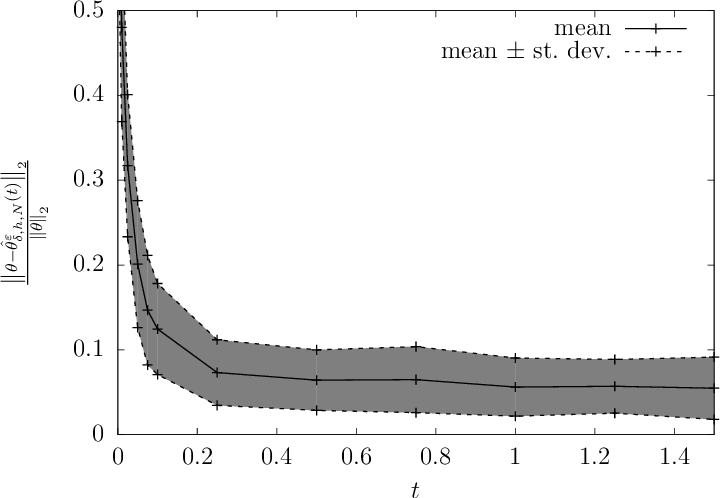}
    \label{fig:fastou:ou:errnorm:small}
  }
  \caption[Relative error of estimator for coarse-grained
  Ornstein--Uhlenbeck SDE]{Mean and standard deviation of the
    estimator's relative error for \eqref{eq:sub:OU:effective} with
    \eqref{eq:sub:OU:effective:OU} as functions of $t$, with
    $h=\delta = 10^{-3}$, and $\varepsilon = 10^{-1}$, for two
    different ensemble sizes $N$.}
  \label{figure:fastou:ou}
\end{figure}
To focus solely on the influence of the input perturbations, i.e.\ to
verify the $\varepsilon$-stability of the methodology numerically, we
plot the relative error in Figure \ref{fig:fastou:ou:errnorm} for a
large ensemble size $N=5000$, so that all other error contributions
are negligible. Specifically, we show the empirical mean and standard
deviation of estimator's relative errors, obtained by repeating the
numerical experiment $100$ times. For very small values of $t$, one
observes large relative errors indicating that the estimators, based
on these approximations, are distorted. In view of
\eqref{eq:conv:rate} this is due to a large constant dominating the
error. Increasing $t$, however, reduces the relative error
significantly, i.e.\ the error constant shrinks. In fact, the mean
relative error drops well below $5\%$ for $t\ge 0.2$ with only minor
fluctuations, indicated by the small standard deviation. Roughly
speaking, by increasing $t$ one increases the information content that
is available to the estimator and the $\mathcal{O}(\varepsilon)$
contribution in error bound \eqref{eq:conv:rate} becomes visible. As a
matter of fact, the formal calculations in \cite[Ch.\
3.4]{Krumscheid2014_phd} for a related estimator suggest that the
multiscale error should even be $\mathcal{O}(\varepsilon^2)$ for this
toy example, which is also confirmed by the numerical experiments
here. In fact, the estimator's mean relative error is between $1\%$
and $2\%$ for $t\ge 0.75$, with standard deviations smaller than
$6\cdot 10^{-3}$. To demonstrate the usefulness of bound
\eqref{eq:conv:rate}, despite the fact that it is rather pessimistic,
Figure \ref{fig:fastou:ou:errnorm:small} illustrates the mean and
standard deviation of the estimator's relative errors for the same
experiment but with a smaller ensemble size $N$. By decreasing $N$,
one can significantly reduce the computational cost while still
controlling the relative error. Specifically, we use $N=100$ so that
$1/\sqrt{N} = \mathcal{O}(\varepsilon)$, which in view of bound
\eqref{eq:conv:rate} should yield relative errors of the same order,
with (possibly) larger fluctuations. As we have however seen in the
previous experiment, the multiscale error is actually
$\mathcal{O}(\varepsilon^2)$ for this simple example, so that we now
expect the estimator's relative error to be dominated by the
statistical error $1/\sqrt{N}= \mathcal{O}(\varepsilon)$.  This is
indeed confirmed in Figure \ref{fig:fastou:ou:errnorm:small}. In fact,
one finds qualitatively the same behaviour as before: the estimator is
biased for small values of $t$, while increasing $t$ considerably
reduces the mean relative error below $7\%$ with some fluctuations
(standard deviation is smaller than $4\cdot 10^{-2}$).

Consider as a second example $h(x,y) = Ax+Bx^3$ and
$\sigma(x) = \sqrt{\sigma_a + \sigma_b x^2}$, so that the
coarse-grained system \eqref{eq:sub:OU:effective} associated with the
multiscale system \eqref{eq:sub:OU:fastslow} reads
\begin{equation}
  dX = (AX+BX^3)\,dt + \sqrt{2(\sigma_a + \sigma_b X^2)}\,dW_t\;.\label{eq:sub:OU:effective:ls4}
\end{equation}
In this case, natural choices for the functions in
\eqref{eq:parameterization} with $n=4$ parameters are
\begin{align*}
  f_1(x)& = x\;,\quad f_2(x) = x^3\;,\quad f_3(x) = 0\;,\quad f_4(x) = 0\;,\\ 
  G_1(x)& = 0\;,\quad G_2(x) = 0\;,\quad G_3(x) = 2\;,\quad G_4(x) = 2x^2\;, 
\end{align*}
where the true parameters are
$\theta \equiv (\theta_1,\theta_2, \theta_3,\theta_4)^T =
(A,B,\sigma_a,\sigma_b)^T$. As admissible function we select
$\phi(x)= (1+x)\exp(-x^2/2)$ to meet the rank condition in Proposition
\ref{prop:asymp:unbiased}, and we approximate the expectations by an
ensemble average again. Finally, we consider $m=54$ trial
points. Figure \ref{figure:fastou:ls4} depicts the 
\begin{figure}[t]
  \centering
  \subfigure[ensemble size $N = 5000$]{
    \includegraphics[width=0.465\textwidth]{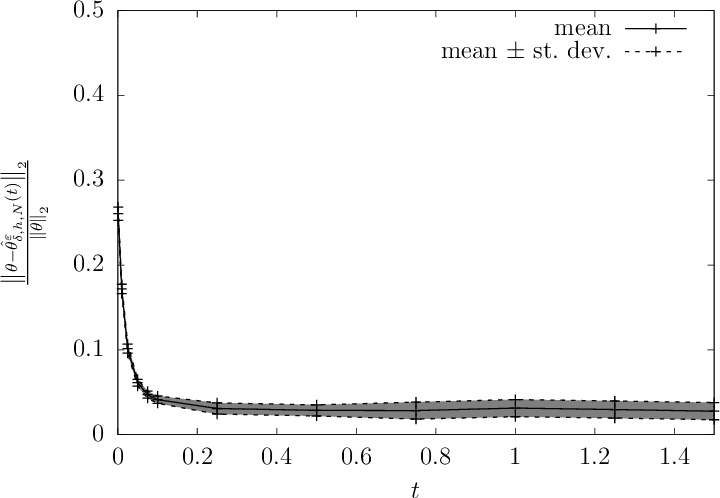}
    \label{fig:fastou:ls4:errnorm}
  }
  \quad
  \subfigure[ensemble size $N = 100$]{
    \includegraphics[width=0.465\textwidth]{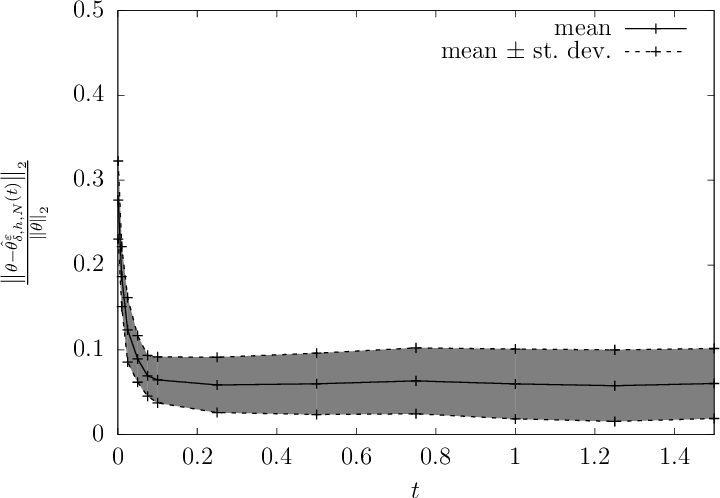}
    \label{fig:fastou:ls4:errnorm:small}
  }
  \caption[Relative error of estimator for coarse-grained
  Landau--Stuart SDE]{Mean and standard deviation of the estimator's
    relative error for \eqref{eq:sub:OU:effective:ls4} as functions of
    $t$, with $h=\delta = 10^{-3}$, and $\varepsilon = 10^{-1}$, for
    two different ensemble sizes $N$.}
  \label{figure:fastou:ls4}
\end{figure}
mean and standard deviation of the estimated value's relative error
for the parameters in \eqref{eq:sub:OU:effective:ls4} corresponding to
the choice $(A,B,\sigma_a,\sigma_b) = (3,-2,3/4,1/2)$ and
$\varepsilon = 0.1$ in \eqref{eq:sub:OU:fastslow}. We consider again
two different ensemble sizes $N\in\{100,5000\}$ and compute the
empirical mean and standard deviations by repeating the same
experiment $100$ times. Despite the fact that SDE
\eqref{eq:sub:OU:effective:ls4}, which models a meta-stable system for
this parameter choices, provides a far more involved structure than
the previous example, the estimation procedure shows qualitatively the
same performance behaviour as before. For the large ensemble size
$N=5000$, Figure \ref{fig:fastou:ls4:errnorm} also displays that
increasing $t$ reduces the mean relative error substantially (to about
$3\%$) and only minor fluctuations (standard deviation is $10^{-2}$)
are present. Even though the results for this example are slightly
less accurate than the ones for the first example, also this example
shows the validity of the error bound \eqref{eq:conv:rate}, although
the bound also appears to be conservative for this example since
actual multiscale error seems to be $\mathcal{O}(\varepsilon^2)$
rather than $\mathcal{O}(\varepsilon)$.  Furthermore, Figure
\ref{fig:fastou:ls4:errnorm:small} also demonstrates the practicality
of bound \eqref{eq:conv:rate} for this example: by decreasing the
ensemble size to $N = 100$, so that
$1/\sqrt{N}=\mathcal{O}(\varepsilon)$, one observes mean relative
errors that show qualitatively the same behaviour as a function of $t$
and that are of now dominated by the statistical error
$1/\sqrt{N}=\mathcal{O}(\varepsilon)$, due to the fact that multiscale
error is $\mathcal{O}(\varepsilon^2)$ here. Moreover, the fluctuations
are comparable to the ones observed in previous example (standard
deviation is smaller than $4.5\cdot 10^{-2}$).
  
\subsection{Brownian motion in a two-dimensional potential}
Another example that falls into the class of multiscale diffusion
processes is the movement model of Brownian motion in a two-scale
potential. Specifically, consider the two-dimensional Langevin
equation
\begin{equation*}
  dX^\varepsilon = -\nabla V\biggl(X^\varepsilon,\frac{1}{\varepsilon}X^\varepsilon;M\biggr)\,dt + \sqrt{2\sigma}\,dU_t\;,
\end{equation*}
where $V(\cdot,\cdot;M)$ denotes a two-scale potential with $M$ being
a set of parameters controlling $V$ and $U$ denotes a standard
two-dimensional Brownian motion used to model the thermal noise. We
assume that the two-scale potential $V(\cdot,\cdot;M)$ is given by a
large scale as well as a separable fluctuating part
$V(x,y;M) = V(x;M) + p_1(y_1) + p_2(y_2)$, with
$x,y\equiv (y_1,y_2)^T\in\R^2$, so that the original system reads
\begin{equation}
  dX^\varepsilon = -\Biggl(\nabla V(X^\varepsilon;M) +
  \frac{1}{\varepsilon}\begin{pmatrix}p_1'\bigl(X_1^\varepsilon/\varepsilon\bigr)\\p_2'\bigl(X_2^\varepsilon/\varepsilon\bigr)\end{pmatrix} \Biggr)\,dt + \sqrt{2\sigma}\,dU_t\;,\label{eq:sub:brown:multi2d}
\end{equation}
where $X^\varepsilon(t) \equiv
{\bigl(X_1^\varepsilon(t),X_2^\varepsilon(t)\bigr)}^T\in\R^2$. Here we
take the large scale part to be a quadratic potential $V(x;M) =
\tfrac{1}{2}x^TMx$, with $M\in\R^{2\times 2}$ being symmetric and
positive definite, so that the coarse-grained equation for
$X(t)\in\R^2$ is given by
\begin{equation}
  dX = -RMX\,dt + \sqrt{2\sigma R}\,dW_t\;,\label{eq:sub:brown:effective2d}
\end{equation}
with analytic expressions for $R=\diag(r_1,r_2)$
\cite{Pavliotis2007}. For the choices $p_1(y_1) = \cos(y_1)$ and
$p_2(y_2) = \cos(y_2)/2$ we find that $r_1 = I_0(1/\sigma)^{-2}$ and
$r_2= I_0(1/(2\sigma))^{-2}$, where $I_0(z)$ denotes the modified
Bessel function of the first kind. For this example a simple choice
for the functions defining the parametrization
\eqref{eq:parameterization} is
\begin{equation*}
  f_1(x) = \begin{pmatrix}x_1\\0\end{pmatrix}\;,\quad f_2(x) = \begin{pmatrix}x_2\\0\end{pmatrix}\;,
  \quad f_3(x) = \begin{pmatrix}0\\x_1\end{pmatrix}\;,\quad f_4(x) = \begin{pmatrix}0\\x_2\end{pmatrix}\;,
  \end{equation*}
  and $f_5(x) = f_6(x) = 0$, as well as $G_1(x) = G_2(x) = G_3(x) = G_4(x) = 0$ and
\begin{equation*} 
   G_5(x) = \begin{pmatrix}2&0\\0&0\end{pmatrix}\;,\quad G_6(x) = \begin{pmatrix}0&0\\0&2\end{pmatrix}\;,
\end{equation*}
where $x\equiv {(x_1,x_2)}^T\in\R^2$.  Hence, we seek to determine $n=6$
parameters, where the true parameters $\theta \equiv (\theta_1,\dots
,\theta_6)^T$ are such that
$\bigl(\begin{smallmatrix}\theta_1&\theta_2\\\theta_3&\theta_4\end{smallmatrix}\bigr)
= -RM$ and $\diag(\theta_5,\theta_6) = \sigma R$. As admissible
function we select here $\phi(x) = \Phi(x_1)\Phi(x_2)$, with
$\Phi(z) = (1+z^2)\exp(-z^2/2)$. Moreover, we choose $m=24$ trial points and
approximate the expectations by ensemble averages ($N=5000$). Figure
\ref{figure:langevin2d} shows the relative error of the
\begin{figure}[t]
    \centering
    \includegraphics[width=0.465\textwidth]{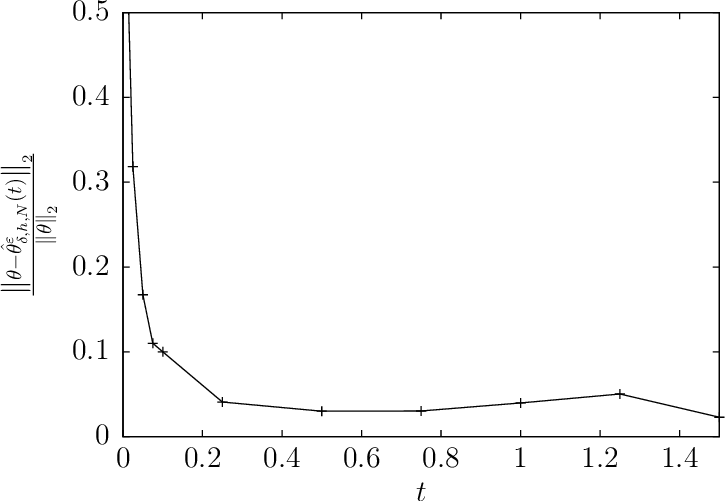}
    \caption[Relative error of estimator for Brownian motion in 2d
    potential]{Relative error of the estimators
      $\hat{\theta}_{\delta,h,N}^\varepsilon$ for
      \eqref{eq:sub:brown:effective2d} as functions of $t$,
      with ensemble size $N=5000$, $h=\delta = 10^{-3}$, and
      $\varepsilon = 10^{-1}$.}
    \label{figure:langevin2d}
\end{figure}
estimated value as a function of $t$ based on observations of the
multiscale system \eqref{eq:sub:brown:multi2d} with $M =
\bigl(\begin{smallmatrix}2&2\\2&3\end{smallmatrix}\bigr)$,
$\sigma=3/2$, and $\varepsilon=0.1$. Also here we observe that the
relative error is significantly reduced to around $5\%$ by increasing
$t$ and only minor fluctuations are present.

\subsection{Brownian motion in a two-scale potential revisited}
In the previous examples we always used an ensemble average to
approximate the expectations. Here we illustrate that the proposed
methodology can also be applied to the situation where only one long
trajectory of observations (i.e.\ a time series) is
available. Consider the one-dimensional Langevin equation
\begin{equation*}
  dX^\varepsilon = -\frac{d}{dx} V_\alpha\Bigl(X^\varepsilon,\frac{1}{\varepsilon} X^\varepsilon\Bigr)\,dt + \sqrt{2\sigma}\,dU_t\;.
\end{equation*}
Let the two-scale potential $V_\alpha$ be given by a quadratic large
scale part plus a fluctuating part, $V_\alpha(x,y) = \alpha x^2/2 +
p(y)$, so that the Langevin equation can be written as
\begin{equation}
  dX^\varepsilon = -\Bigl(\alpha X^\varepsilon + \frac{1}{\varepsilon} p'\bigl(X^\varepsilon/\varepsilon\bigr)\Bigr)\,dt + \sqrt{2\sigma}\,dU_t\;. \label{eq:eps:sde:langevin1d}
\end{equation}
When the fluctuating part $p$ is sufficiently smooth, bounded, and
periodic with period $L$, the coarse-grained equation is given by
\begin{equation}
  dX = -A X\,dt + \sqrt{2\Sigma}\,dW_t\;,\label{eq:eps:sde:langevin1d:coarse}
\end{equation}
with $A=\alpha L^2/(Z_{+}Z_{-})$ and $\Sigma=\sigma L^2/(Z_{+}Z_{-})$,
where $Z_{\pm} = \int_0^Le^{\pm p(y)/\sigma}\,dy$.

To effectively use the estimation procedure based on one long
trajectory of time discrete approximations of
\eqref{eq:eps:sde:langevin1d}, the time discrete approximations have
to satisfy a mixing condition, as detailed in Proposition
\ref{prop:conv:expect:mixing}. To check this condition, we assume that
the time discrete approximations are the result of an Euler--Maruyama
approximation and that $p'$ is bounded. Let
$g_h^\varepsilon(x) := (1-\alpha h)x -
p'(x/\varepsilon)h/\varepsilon$, then the Euler--Maruyama scheme
applied to \eqref{eq:eps:sde:langevin1d} on $[0,t=n_th]$ can be
written as
\begin{equation}
  \bar{X}_{h\vert\xi}^\varepsilon\bigl((k+1)h\bigr) = g_h^\varepsilon\bigl( \bar{X}_{h\vert\xi}^\varepsilon(kh) \bigr) + \eta_k\sqrt{2\sigma h}\;,\quad\bar{X}_{h\vert\xi}^\varepsilon(0) = \xi\;,\label{eq:eps:sde:langevin1d:EM}
\end{equation}
for $0\le k< n_t$, where the sequence of random variables
${(\eta_k)}_{0\le k< n_t}$ is i.i.d.\ with
$\eta_0\sim\mathcal{N}(0,1)$. For any $h,\varepsilon > 0$ sufficiently
small, one can thus find $b,c>0$ and $a\in(0,1)$ such that
$\abs{g(x)}\le a\abs{x}-b$ for $\abs{x}\ge c$, since $p'$ is
bounded. As the Euler--Maruyama scheme
\eqref{eq:eps:sde:langevin1d:EM} generates essentially a stochastic
difference equation of autoregressive type, it follows from \cite[p.\
$102$]{Doukhan1994} that the process
${\bigl(\bar{X}_{h\vert\xi}^\varepsilon(kh)\bigr)}_{k\ge 0}$ is
strictly stationary and geometrically $\alpha$-mixing. Consequently,
Proposition \ref{prop:conv:expect:mixing} ensures that the error of
approximating the expectation by the regression estimator
\eqref{eq:approx:exp:mixing} vanishes and that the main convergence
result (Proposition \ref{prop:asymp:unbiased}) holds.

To estimate the $n=2$ parameters in
\eqref{eq:eps:sde:langevin1d:coarse}, we use $f_1(x) = x$,
$f_2(x) = 0 = G_1(x)$, and $G_2(x) = 2$ in the parametrization
\eqref{eq:parameterization}. For the numerical experiment below we set
$p(y)=\cos(y)$ so that the true parameters are
$\theta \equiv (\theta_1,\theta_2)^T =
{I_0(\sigma^{-1})}^{-2}(-\alpha,\sigma)^T$, with $I_0(z)$ again
denoting the modified Bessel function of first kind. Moreover, we use
$m=24$ trial point and $\phi(x) = \exp(-x^2/2)$ as admissible
function.
\begin{figure}[t]
    \centering
    \includegraphics[width=0.465\textwidth]{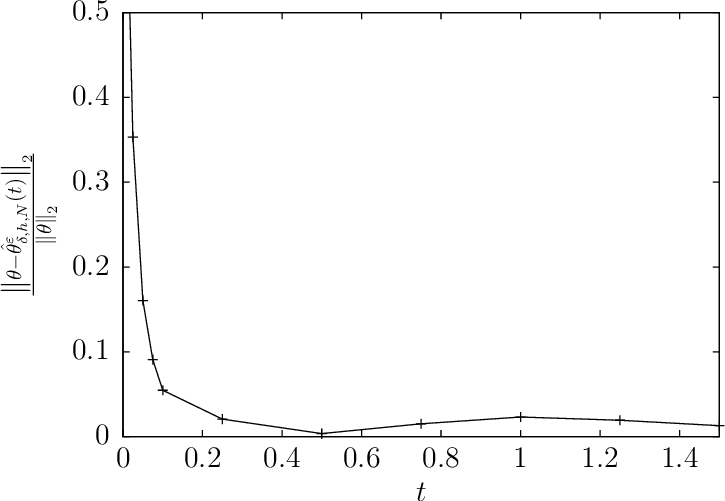}
    \caption[Relative error of estimator for Brownian motion in 1d
    potential]{Relative error of the estimators
      $\hat{\theta}_{\delta,h,N}^\varepsilon$ for
      \eqref{eq:eps:sde:langevin1d:coarse} as functions of
      $t$, using $h=\delta = 10^{-3}$, $\varepsilon = 10^{-1}$, and a
      single time series on $[0,5000]$.}
    \label{figure:langevin1d}
\end{figure}
Figure \ref{figure:langevin1d} shows the relative error of the
estimated value as a function of $t$, when one trajectory of
observations on $[0,5000]$ is obtained from the multiscale system with
$(\alpha,\sigma) = (2,1)$, and $\varepsilon = 0.1$. The same behaviour
of the relative error as a function of $t$ is evident: very small $t$
yields distorted estimated values, while increasing $t$ reduces the
error significantly. In fact, for $t\ge 0.1$ the relative error drops
well below $5\%$ with only minor fluctuations. Since bound
\eqref{eq:conv:rate} is not guaranteed to be valid in this case, the
constants in front of the rates might depends on other parameters (see
discussion in Section \ref{conv:errana:rates}). Therefore we chose a
rather long time series to focus solely on $\varepsilon$-stability,
that is on the influence of the perturbation of the input, and to
illustrate the convergent behaviour of the estimation procedure.

%
%
\section{Conclusion}
\label{sec:conclusion}
We have studied the convergence of parametric estimation procedures
for diffusion processes from a numerical analysis
perspective. Specifically, we have introduced consistency, stability,
and convergence concepts for estimation procedures. It turns out that
the maximum likelihood estimator is not convergent within this
framework, since it fails to be stable. Conversely, we have introduced
an inference methodology which is provably convergent within this
framework. This convergence property of an estimation procedure is
pivotal in many applications, such as for data-driven coarse-graining
approaches from multiscale observations. We have studied several
examples of this class to verify the theoretical results of the
introduced methodology. Furthermore, these examples demonstrate that
the estimation procedure can be used to accurately approximate
parameters in both the drift function and the diffusion function.

There are still many challenges that remain to be addressed. One is,
for example related to the rigorous verification of the mixing
conditions in the case where only one time series is available. From a
theoretical perspective this is not easy, as the available theory is
quite restrictive. In fact, most of it is only applicable for a
constant diffusion coefficient and a drift satisfying a linear growth
condition; see, e.g., \cite{Klokov2013} and references
therein. Standard conditions on drift and diffusion functions ensuring
the mixing conditions of the continuous time diffusion process are,
e.g., given in
\cite{Veretennikov1987,Veretennikov1989,Leblanc1997}. From a practical
perspective, however, this condition does not appear to be too
restrictive, as the results in \cite{Kalliadasis2015} indicate.

But there are also other interesting questions left open. During the
construction of the estimator, for example, there are still some
degrees of freedom, which we have not used optimally yet. For
instance, it seems that the particular choice of the admissible
function $\phi$ can influence the error constant of the error
bound. Therefore, an important task for future research is to study
whether or not one can minimise the error constant not only with
respect to $\phi$, but also with respect to the number and location of
the trial points. Moreover, characterising the error constant's
dependency on the parameter $t$ is also desirable. A closely related
avenue for future efforts is also the study of the asymptotic
distribution of the estimators, which in turn can be used to guide the
construction of asymptotic confidence intervals for the estimated
values. These and related topics will be treated in future studies.

\section*{Acknowledgements}
I would like to thank both anonymous referees for their insightful
comments and suggestions. Furthermore, I am grateful to my former PhD
supervisors Prof.\ G.A.~Pavliotis and Prof.\ S.~Kalliadasis for many
useful comments and suggestions. Thanks are also due to Dr.\
A.~Veraart and Prof.\ S.~Reich for critically reading an earlier
version of the manuscript and their helpful comments. This work was
supported by the Engineering and Physical Sciences Research Council of
the UK through Grant No.\ EP/H034587.

\bibliography{refs4paper}

\bibliographystyle{siam}

\end{document}